\numberwithin{equation}{section}
\setlist[enumerate,1]{label={\rm(\alph*)}, ref={\rm(\alph*)}} 
\newtheorem{thm}{Theorem}[section]
\newtheorem{prop}[thm]{Proposition}
\newtheorem{lem}[thm]{Lemma}
\newtheorem{cor}[thm]{Corollary}
\theoremstyle{definition}
\newtheorem{quest}[thm]{Question}
\theoremstyle{remark}
\newtheorem{rems}[thm]{Remarks}
\newcommand{\Hdg}{\operatorname{Hdg}}
\newcommand{\td}{\operatorname{td}}
\newcommand{\alg}{\operatorname{alg}}
\newcommand{\ch}{\operatorname{ch}}
\newcommand{\MU}{\operatorname{MU}}
\newcommand{\BU}{\operatorname{BU}}
\def\alg{\operatorname{alg}}
\newcommand{\isoto}{\myxrightarrow{\,\sim\,}}
\def\myrightarrow{{\setbox\z@\hbox{$\rightarrow$}\dimen0\ht\z@\multiply\dimen0 6\divide\dimen0 10\ht\z@\dimen0\box\z@}}
\def\myrightarrowfill@{\arrowfill@\relbar\relbar\myrightarrow}
\newcommand{\myxrightarrow}[2][]{\ext@arrow 0359\myrightarrowfill@{#1}{#2}}
\def\bS{\mathbb S}
\def\Z{\mathbb Z}
\def\C{\mathbb C}
\def\Q{\mathbb Q}
\title{Smooth subvarieties of Jacobians}
\author{Olivier Benoist}
\address{D\'epartement de math\'ematiques et applications, \'Ecole normale sup\'erieure, CNRS,45 rue d'Ulm, 75230 Paris Cedex 05, France}
\email{olivier.benoist@ens.fr}
\author{Olivier Debarre}
\address{Universit\'e Paris Cit\'e and Sorbonne Universit\'e, CNRS, IMJ-PRG, F-75013 Paris, France}
\email{olivier.debarre@imj-prg.fr}
\begin{document}

 
\maketitle

\begin{prelims}

\DisplayAbstractInEnglish

\bigskip

\DisplayKeyWords

\medskip

\DisplayMSCclass

\end{prelims}


\newpage

\setcounter{tocdepth}{1}

\tableofcontents


\section{Introduction}

Let $c,d $ be nonnegative integers and let $X$ be a smooth projective complex variety of dimension $n:=c+d$. An important geometric invariant of $X$ is
the subgroup $H^{2c}(X,\Z)_{\alg}\subset H^{2c}(X,\Z)$ of algebraic cohomology classes, which is generated by the cycle classes of codimension~$c$ algebraic subvarieties $Y\subset X$. Since the subvarieties $Y $ may be singular, the following question going back to Borel and Haefliger \cite[Section~5.17]{BH} naturally arises.

\begin{quest}
\label{BH}
Is the group $H^{2c}(X,\Z)_{\alg}$ generated by classes of smooth subvarieties of $X$?
\end{quest}

The answer to Question~\ref{BH} is obviously positive if $d=0$ or if $c\leq 1$.\ Further positive answers were obtained by Hironaka \cite[Theorem, Section~5, ~p.~50]{Hironaka} when $d\leq\min(3,c-1)$ and by Kleiman \cite[Theorem~5.8]{Kleiman} when $c=2$ and $d\in\{2,3\}$; the answer is therefore positive  for all $n\le 5$.

A first counterexample was then constructed by Hartshorne, Rees, and Thomas \cite[Theorem~1]{HRT} when~$c=2$ and $d\geq 7$ (and their method should yield counterexamples for all~$c\geq 2$ and $d\gg c$). Other counterexamples were given by Debarre \cite[Th\'eor\`eme 6]{Debarre} when~$c=2$ and $d\geq 5$, and by Benoist \cite[Theorem 0.3]{approx} when $d\geq c$ and $\alpha(c+1)\geq 3$ (where $\alpha(m)$ denotes the number of ones in the binary expansion of~$m$). 

In this article, we build on the counterexample given in \cite{Debarre}. There, Debarre considers the Jacobian $X$ of a smooth projective complex curve $C$ of genus $n$, polarized by its theta divisor class $\theta\in H^2(X,\Z)$. The minimal cohomology class $\frac{\theta^c}{c!}\in H^{2c}(X,\Z)$ is the cycle class of the image $W_{n-c}(C)\subset X$ of the symmetric power $C^{(n-c)}$ by the Abel--Jacobi map,  hence is algebraic. However, when $n\ge 2c+2$, the variety $W_{n-c}(C)$ is singular and there is  no general reason why its class  $\frac{\theta^c}{c!}$ should be a $\Z$-linear combination of classes of smooth subvarieties of~$X$. Debarre shows that this is indeed not the case  if $n\geq 7$,~$c=2$, and  $X$ is very general.

 Our main theorem extends this result to many other values of $(c,n)$. Recall that $\alpha(m)$ is the number of ones in the binary expansion of~$m$.

\begin{thm}[Theorem~\ref{thsmooth}]
\label{main}
Let $(X,\theta)$ be a very general complex Jacobian of dimension~$n$. Let $c $ be a nonnegative integer such that $\alpha(c+\alpha(c))>\alpha(c)$ and $n\geq 4c-2$. Then the integral classes $\lambda\frac{\theta^c}{c!}$ with $\lambda$ odd are algebraic but are not $\Z$-linear combinations of cycle classes of smooth subvarieties of $X$.
\end{thm}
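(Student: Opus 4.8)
The plan is to separate the two assertions. Algebraicity is classical: $\frac{\theta^c}{c!}$ is the cycle class of the Abel--Jacobi image $W_{n-c}(C)\subset X$ of the symmetric power $C^{(n-c)}$, so every multiple $\lambda\frac{\theta^c}{c!}$ is algebraic. The substance is the negative statement, which I would prove by contradiction using complex cobordism. Suppose $\lambda\frac{\theta^c}{c!}=\sum_i n_i[Z_i]$ with $\lambda$ odd and each $Z_i\subset X$ smooth of codimension $c$, and lift this relation to cobordism in two incompatible ways. Each $Z_i$ has an \emph{honest} complex normal bundle of rank $c$, so its Pontryagin--Thom collapse factors through the Thom space $\MU(c)=\mathrm{Th}(\gamma_c\to\BU(c))$; the corresponding ``codimension exactly $c$'' cobordism invariants are additive and functorial, so $\sum_i n_i[Z_i]$ defines such a class, mapping to $\lambda\frac{\theta^c}{c!}$ in $H^{2c}(X,\Z)$. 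On the other hand, resolve $W_{n-c}(C)$ (singular in general) by $C^{(n-c)}$; since $T_X$ is trivial, the Abel--Jacobi morphism $f\colon C^{(n-c)}\to X$ is canonically $\MU$-oriented of virtual codimension $c$, and $f_!(1)\in\MU^{2c}(X)$ is a second lift of $\frac{\theta^c}{c!}$. The assumption then forces that, after correction by an element of $K:=\ker(\MU^{2c}(X)\to H^{2c}(X,\Z))$, the class $\lambda f_!(1)$ comes from the codimension-$c$ cobordism of $X$, i.e. lifts along the comparison map $\Sigma^{-2c}\Sigma^\infty\MU(c)\to\MU$.

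The core is to obstruct this lift. The cofiber $C_\bullet$ of $\Sigma^{-2c}\Sigma^\infty\MU(c)\to\MU$ is highly connected: a homology computation via the Thom isomorphism shows it is $(2c+1)$-connected with $\pi_{2c+2}(C_\bullet)\cong\Z$, detected by the $(c+1)$-st Conner--Floyd Chern class of the (virtual) normal bundle, whose vanishing is what singles out codimension-$c$ classes. Running the relevant Atiyah--Hirzebruch spectral sequences, the obstruction to the desired lift is a class in a generalized cohomology group of $X$ whose components in low cohomological degree are successively absorbed --- either because $K$ is large enough to cancel them, or because the relevant characteristic classes of $C^{(n-c)}$ vanish for dimension reasons --- until one is led to a component living in $H^{8c-4}(X,\Z)$. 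This is exactly where the hypothesis $n\ge 4c-2$ enters: it is precisely what makes $H^{8c-4}(X,\Z)\ne 0$, and at $n=4c-2$ this is the top cohomology $\Z$, so the obstruction is a single integer --- which also explains why the smallest example has dimension $6$, with $c=2$.

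It remains to compute this integer and show it is odd. Using MacDonald's description of the cohomology ring of the symmetric power $C^{(n-c)}$ and of its tangent bundle, one evaluates the relevant $s$-characteristic number of $C^{(n-c)}$; pushing it forward along $f$ and using the multiplicativity $\frac{\theta^a}{a!}\cdot\frac{\theta^b}{b!}=\binom{a+b}{a}\frac{\theta^{a+b}}{(a+b)!}$, the obstruction becomes that number times a binomial coefficient. Here ``$X$ very general'' is used to guarantee that the algebraic (indeed Hodge) cohomology of $X$ is spanned by the minimal classes $\frac{\theta^a}{a!}$, so that no further relation can absorb the obstruction. A $2$-adic analysis via Kummer's and Lucas's theorems then shows that this integer is odd precisely under the hypothesis $\alpha(c+\alpha(c))>\alpha(c)$; multiplying by the odd integer $\lambda$ does not affect the parity, and the contradiction follows.

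The main obstacle is twofold. First, one must make the cobordism obstruction fully explicit: this requires controlling the higher differentials and extensions in the spectral sequences, together with the $2$-local (Milnor) structure of $\MU_*$, in order to pin down exactly which characteristic number of $C^{(n-c)}$ and which binomial coefficient govern it. Second, one must carry out the resulting computation on the symmetric product $C^{(n-c)}$ and reduce the final numerical condition to $\alpha(c+\alpha(c))>\alpha(c)$ --- in particular checking that this condition excludes precisely the degenerate cases, such as $c+2$ a power of $2$, where the image of $\pi_{2c+2}(C_\bullet)$ in $\Z$ is even and the obstruction is automatically divisible by $2$.
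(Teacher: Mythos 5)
Your proposal correctly identifies the toolbox (Pontryagin--Thom, complex cobordism, the reduction of Hodge classes to multiples of $\frac{\theta^k}{k!}$ on a very general Jacobian, a $2$-adic analysis of a binomial coefficient), but the route you sketch has gaps at exactly the points where the work is. The central one is the ``absorption'' step. Once you allow the correction term $\kappa\in K=\ker(\MU^{2c}(X)\to H^{2c}(X,\Z))$, the obstruction to lifting $\lambda f_!(1)+\kappa$ through $\Sigma^{-2c}\Sigma^\infty\MU(c)\to\mathbf{MU}$ depends on $\kappa$, and $K$ is a large group built from $H^{2c+2j}(X,\MU_{2j})$ for $j\ge1$; you give no mechanism for controlling its contribution, and this is precisely the hard part. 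Relatedly, the final invariant cannot be computed on the symmetric product $C^{(n-c)}$: the $\MU$-classes of $\sum n_iZ_i$ and of $C^{(n-c)}\to X$ agree only in $H^{2c}$, so their characteristic numbers differ by exactly the uncontrolled $K$-ambiguity. The paper resolves this by working in $\MU$-\emph{homology}: for each individual smooth $Z_i$ one decomposes $[Z_i\hookrightarrow X]=\sum_j x_j\cdot\tau_{d-j}$ in the free $\MU_*$-module $\MU_*(X)$ (Proposition \ref{MUppav}, using that the pushforwards $f_*P(c_j(Z_i))$ are algebraic, hence Hodge, hence multiples of powers of $\theta$), and then proves directly that each $Z_i$ has \emph{even} class, so that every $\Z$-linear combination is an even multiple of $\frac{\theta^c}{c!}$; no lifting obstruction for the fixed class $\lambda\frac{\theta^c}{c!}$ is ever needed.

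Two more concrete points. First, your proof never invokes a Barth--Lefschetz theorem, yet Sommese's theorem is indispensable: it is what forces $s_i(Z_i)$ to be a multiple of $f^*\frac{\theta^i}{i!}$ for $i\le c-1$, which is needed to evaluate the correction polynomial $Q$ in the congruence $s_c+2Q\equiv0\pmod{2^{\alpha(c)+1}}$ on $\MU_{2c}$. Accordingly, your reading of $n\ge 4c-2$ as ``$H^{8c-4}(X,\Z)\neq0$'' is not the actual role of the hypothesis: the decisive characteristic-number identity is obtained by pairing against $\frac{\theta^{n-2c}}{(n-2c)!}$ and lives in top degree $H^{2n}$ for \emph{every} $n$, so nonvanishing of $H^{8c-4}$ is not the constraint; $4c-2$ is the Barth--Lefschetz range $n\ge 4c-2l$ with $l=1$. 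Second, the condition $\alpha(c+\alpha(c))>\alpha(c)$ does not come from a Lucas/Kummer analysis of a characteristic number of $C^{(n-c)}$; it is the criterion $\alpha(i+e+h-1)>e+2h-2$ with $(i,e,h)=(c,\alpha(c),1)$ for the existence of $Q$ as above --- a statement about the $2$-local discrepancy between $\pi_*(\mathbf{MU})$ and $H_*(\mathbf{MU},\Z)$ (Hazewinkel generators, Rees--Thomas) --- combined with the fact that $\binom{2c}{c}$, which appears because the identity involves $a_c\frac{\theta^c}{c!}\cdot\frac{\theta^c}{c!}$, has $2$-adic valuation exactly $\alpha(c)$. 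As written, your argument asserts rather than derives the parity obstruction, and the derivation is the theorem.
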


The weird condition $\alpha(c+\alpha(c))>\alpha(c)$ springs naturally from our proof. It holds for integers $c$ in the set $ \{2,4,5,8,9,12,16,17,\dots\}$.  We have nothing to say when $c=3$: we do not know if there exist Jacobians~$(X,\theta)$ such that $\frac{\theta^3}{3!}$ is not a $\Z$-linear combination of cycle classes of smooth subvarieties of $X$. Applied with~$c=2$ and $n=6$, Theorem~\ref{main} implies the following result.

\begin{cor}
\label{cormain}
There exists a smooth projective complex variety $X$ of dimension $6$ such that $H^4(X,\Z)_{\alg}$ is not generated by classes of smooth subvarieties of $X$.
\end{cor}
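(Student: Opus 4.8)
The plan is to obtain this as the special case $c=2$, $n=6$ of Theorem~\ref{main}.\ First I would check that these values satisfy the two numerical hypotheses.\ Writing $2$ in binary as $10$ gives $\alpha(2)=1$, so $c+\alpha(c)=3$, and since $3=11$ in binary we get $\alpha(c+\alpha(c))=\alpha(3)=2>1=\alpha(c)$; moreover $4c-2=6$, so the inequality $n\geq 4c-2$ holds (indeed with equality).\ Thus both hypotheses of Theorem~\ref{main} are met for $(c,n)=(2,6)$.

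Next I would exhibit the variety.\ Let $C$ be a very general smooth projective complex curve of genus $6$ and let $X=\mathrm{Jac}(C)$, a smooth projective complex variety of dimension $n=6$, equipped with its theta divisor class $\theta\in H^2(X,\Z)$; since the moduli space of smooth genus-$6$ curves is nonempty and irreducible, such a $C$ exists and $X$ is then a very general complex Jacobian of dimension $6$ in the sense of Theorem~\ref{main}.\ Applying that theorem with $\lambda=1$, the minimal cohomology class $\frac{\theta^2}{2!}\in H^4(X,\Z)$ is algebraic --- it is the cycle class of $W_4(C)\subset X$, the image of $C^{(4)}$ under the Abel--Jacobi map --- but is not a $\Z$-linear combination of cycle classes of smooth subvarieties of $X$.\ Consequently the subgroup of $H^4(X,\Z)_{\alg}$ generated by classes of smooth subvarieties is a proper subgroup, which is exactly the assertion of the corollary.

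Since this is just a matter of unwinding Theorem~\ref{main} for one choice of parameters, there is no genuine obstacle in the argument itself; the only point worth flagging is that dimension $6$ saturates the bound $n\geq 4c-2$ at $c=2$, while the positive answers to Question~\ref{BH} recalled in the introduction cover all $n\leq 5$, so $6$ is the smallest dimension this construction can reach --- matching the claim "the lowest possible" in the abstract.
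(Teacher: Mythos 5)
Your proposal is correct and is exactly the paper's argument: the corollary is stated as the specialization of Theorem~\ref{main} to $c=2$, $n=6$, $\lambda=1$, and your verification that $\alpha(3)=2>1=\alpha(2)$ and $n=6=4c-2$ is precisely what is needed. Nothing further is required.
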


As noted above, the groups of algebraic cohomology classes of smooth projective varieties of dimension at most $5$ are generated by classes of smooth subvarieties. The  dimension of the variety $X$ in Corollary~\ref{cormain} is thus the lowest possible. We do not know, however, if Question~\ref{BH} has a positive answer if $c=d=3$. 

The proof of Theorem~\ref{main} when $c=2$ and $n\geq 7$ given in \cite{Debarre} relies on a Barth--Lefschetz-type theorem for abelian varieties, on the Hirzebruch--Riemann--Roch theorem, and on the Serre construction. We can still rely on the same Barth--Lefschetz-type theorem under our more general hypotheses. However,  computations based on the Hirzebruch--Riemann--Roch theorem become untractable in high codimension (and do not yield a proof when $c=2$ and $n=6$), and we cannot use the Serre construction in general because it is specific to codimension $2$ subvarieties. 

Both difficulties are overcome by resorting to complex cobordism, whose use in the theory of algebraic cycles was pioneered by Totaro \cite{Totaro}. We replace the integrality results derived from the Hirzebruch--Riemann--Roch theorem with divisibility properties of Chern numbers (Proposition~\ref{cong}). Those are obtained in Section~\ref{secChern} as a consequence of a detailed understanding of the structure of the complex cobordism ring. In \cite{Debarre}, the Serre construction was used to infer restrictions on the cohomology class of a smooth subvariety~$Y\subset X$. Instead, we consider the class of $Y$ in the complex cobordism of $X$ and use the description of the complex cobordism of an abelian variety (in Sections~\ref{parMU}--\ref{parMUab}). These tools are combined in Section~\ref{parproof} to prove Theorem~\ref{main}.

In low codimension, a different method based on complex topological $K$-theory and on the Grothendieck--Riemann--Roch theorem, closer to the one used in \cite{Debarre}, gives small improvements on Theorem~\ref{main}.  We use this method in Section~\ref{secCod4} to prove the following.

\begin{thm}[Theorem~\ref{thsmooth4}]
\label{main2}
Let $(X,\theta)$ be a very general complex Jacobian of dimension $n$. Then $\lambda\frac{\theta^4}{4!}$ is algebraic but not a $\Z$-linear combination of classes of smooth subvarieties of~$X$ 
\begin{enumerate}
\item\label{ffa} if $n\geq 12$ and $\lambda$ is odd;
\item\label{ffb} if $n\geq 14$ and $\lambda$ is not divisible by $4$.
\end{enumerate}
\end{thm}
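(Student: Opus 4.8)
The plan is to adapt the argument of \cite{Debarre}, replacing its use of the Serre construction and the Hirzebruch--Riemann--Roch theorem by complex topological $K$-theory and the Grothendieck--Riemann--Roch theorem. Since $(X,\theta)$ is very general its Hodge classes are generated by $\theta$, and as $\frac{\theta^4}{4!}=[W_{n-4}(C)]$ is indivisible one gets $H^8(X,\Z)_{\alg}=\Z\cdot\frac{\theta^4}{4!}$. Hence in any relation $\lambda\frac{\theta^4}{4!}=\sum_i n_i[Y_i]$ with the $Y_i\subset X$ smooth, only the $Y_i$ of codimension $4$ contribute, and each of those satisfies $[Y_i]=\mu_i\frac{\theta^4}{4!}$ with $\mu_i\in\Z$ and $\lambda=\sum_i n_i\mu_i$. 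So it suffices to prove: for every smooth closed subvariety $Y\subset X$ of codimension $4$, writing $[Y]=\mu\frac{\theta^4}{4!}$, one has $2\mid\mu$ when $n\geq 12$ and $4\mid\mu$ when $n\geq 14$; this forces $\lambda$ to be even (resp. divisible by $4$), a contradiction.

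To control $\mu$, let $N:=N_{Y/X}$ and let $i_!\cO_Y\in K^0(X)$ be the image in topological $K$-theory of the algebraic class $[i_*\cO_Y]$. Since $K_X$ is trivial, Grothendieck--Riemann--Roch gives $\ch(i_!\cO_Y)=i_*\bigl(\td(N)^{-1}\bigr)$. The Barth--Lefschetz theorem for subvarieties of abelian varieties shows $H^{2k}(X,\Z)\to H^{2k}(Y,\Z)$ is bijective for $2k\leq 2\dim Y-\dim X=n-8$; combined with the generation of Hodge classes by $\theta$ and the indivisibility of $\frac{\theta^k}{k!}$, this gives $c_1(N)=i^*(a_1\theta)$, $c_2(N)=i^*(a_2\theta^2)$ with $a_i\in\Z$ once $n\geq 12$, also $c_3(N)=i^*(a_3\theta^3)$ with $a_3\in\Z$ once $n\geq 14$, and $c_4(N)=i^*[Y]$ always by the self-intersection formula; moreover $i_*$ of any monomial in the $c_k(N)$ is algebraic on $X$, hence an integer multiple of $\frac{\theta^m}{m!}$. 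Thus $\ch(i_!\cO_Y)\in\Q[\theta]$, and expanding the inverse Todd class,
\[
\ch(i_!\cO_Y)=\frac{\mu}{4!}\,\theta^4\Bigl(1-\tfrac{a_1}{2}\theta+\tfrac{2a_1^2-a_2}{12}\theta^2+\tfrac{a_1a_2-a_1^3}{24}\theta^3+\cdots\Bigr),
\]
where for $n\geq 14$ the remaining coefficients are polynomials in $a_1,a_2,a_3,\mu$, while for $12\leq n\leq 13$ a few of them involve extra integer unknowns coming from the undetermined classes $i_*c_3(N),i_*c_3(N)^2,\dots\in\bigoplus_m\Z\frac{\theta^m}{m!}$.

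The arithmetic input is that $\ch(i_!\cO_Y)$ lies in $\Lambda:=\ch\bigl(K^0(X)\bigr)$, equivalently that $\int_X\ch(i_!\cO_Y)\,\ch(\xi)\in\Z$ for every $\xi\in K^0(X)$. For a complex torus the Atiyah--Hirzebruch spectral sequence degenerates and $H^{\mathrm{even}}(X,\Z)=\wedge^{\mathrm{even}}H^1(X,\Z)$ is generated in degree $2$, so $K^0(X)$ is generated by topological line bundles and $\Lambda$ is the lattice spanned by $\{e^v:v\in H^2(X,\Z)\}$. Note that $\frac{\theta^4}{4!}$ by itself already lies in $\Lambda$: writing $\theta=\sum_{j=1}^n e_j\wedge f_j$ in a symplectic basis of $H^1(X,\Z)$ and letting $L_j$ be the topological line bundle with $c_1(L_j)=e_j\wedge f_j$, one has $\frac{\theta^4}{4!}=\sum_{|S|=4}\prod_{j\in S}\bigl(\ch(L_j)-1\bigr)$. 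So the constraint on $\mu$ must be squeezed out of the Todd shape of the higher-order part of $\ch(i_!\cO_Y)$ together with its membership in $\Lambda$: concretely, one pairs $\ch(i_!\cO_Y)$ against virtual topological bundles assembled from the rank-$2$ pieces $L_j-1$ of the polarization (such as $\prod_{j=1}^{n-k}(L_j-1)$) and from powers of $\cO_X(\theta)$, getting one integrality congruence per test bundle relating $\mu$, the $a_i$, and the higher unknowns, and then combines these so that the $a_i$ and the unknowns cancel, leaving a pure congruence on $\mu$ modulo $2$ (resp. modulo $4$). The thresholds $n\geq 12$ and $n\geq 14$ are precisely the ranges where Barth--Lefschetz controls $c_1,c_2$ (resp. $c_1,c_2,c_3$) of $N$, hence enough of the inverse Todd class for this cancellation to close.

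The delicate step is this last one, and I expect it to be the main obstacle. Pairing naively against powers of $e^\theta$ merely reproduces the integrality of Euler characteristics on $Y$ and gives nothing, and because integration over $X$ introduces large factorials, individual pairings against the $L_j$-bundles tend to yield only weak congruences of the form ``$2\mid a_i\mu$'' or ``$4\mid(\text{polynomial in the }a_i)\cdot\mu$'', which do not on their own give $2\mid\mu$. One must therefore choose the test bundles carefully---using crucially that the classes $e_j\wedge f_j$ are visible topologically but not algebraically on the very general $X$---and organize the resulting system of congruences using the exact rational coefficients of the inverse Todd class, so that the dependence on $a_1,a_2$ (resp. $a_1,a_2,a_3$) and on the higher unknowns disappears and the sharp $2$-adic statement about $\mu$ survives. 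This is the $K$-theoretic counterpart of the Hirzebruch--Riemann--Roch bookkeeping of \cite{Debarre}, made heavier by the passage from codimension $2$ to codimension $4$ and by the appearance of the full rank-$4$ normal bundle in place of a rank-$2$ Serre bundle.
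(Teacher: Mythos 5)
Your skeleton coincides with the paper's: reduce to showing that every smooth codimension-$4$ subvariety $Y\subset X$ has class an even (resp.\ $4$-divisible) multiple of $\frac{\theta^4}{4!}$, control $c_1(N)$ and $c_2(N)$ (and $c_3(N)$ when $n\geq 14$) via Barth--Lefschetz and the generation of Hodge classes by the $\frac{\theta^k}{k!}$, and apply Grothendieck--Riemann--Roch to the image of $f_*[\mathcal{O}_Y]$ in topological $K$-theory. But the decisive step---actually extracting a congruence on the multiple $\mu$---is absent, and you flag it yourself as ``the main obstacle.'' The observation you are missing is that for a torus the Chern character $\ch\colon K^*((\bS^1)^{2n})\to H^*((\bS^1)^{2n},\Q)$ is an isomorphism onto $H^*((\bS^1)^{2n},\Z)$ (proved from the case of $\bS^1$ and the K\"unneth formulas in $K$-theory and cohomology); equivalently, your lattice $\Lambda$ is \emph{all} of $H^{\mathrm{even}}(X,\Z)$. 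Once this is known, ``membership in $\Lambda$'' is literally the integrality of each graded piece of $\ch(\chi)$, so there is no system of test bundles to design, no pairings to combine, and no cancellation of the $a_i$ to engineer: one expands $\td(-N_{Y/X})$, pushes forward, and reads off coefficients against the primitive classes $\frac{\theta^k}{k!}$.

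Concretely, with $[Y]=a_4\frac{\theta^4}{4!}$ one finds $\ch_5(\chi)=a_1a_4\theta^5/48=\frac{5}{2}a_1a_4\frac{\theta^5}{5!}$ and $\ch_6(\chi)=(4a_1^2-a_2)a_4\theta^6/576=\frac{5}{4}(4a_1^2-a_2)a_4\frac{\theta^6}{6!}$, whose integrality forces $a_1a_4$ even and $4\mid a_2a_4$; the contradiction then comes from $\ch_8(\chi)$, whose expansion contains the term $a_4^2\theta^8/414\,720=\frac{7a_4^2}{72}\frac{\theta^8}{8!}$, of negative $2$-adic valuation when $a_4$ is odd, while every other term is checked (using the parities just derived, and writing $f_*s_3(Y)=b\,\theta^7/7!$ to absorb the one class not controlled by Barth--Lefschetz when $12\leq n\leq 13$) to be a $2$-adically integral multiple of $\frac{\theta^8}{8!}$; the case $4\nmid a_4$, $n\geq 14$ is handled the same way after substituting $b=35a_3a_4$. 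None of this bookkeeping appears in your write-up, and the route you propose instead---pairing against products $\prod_j(L_j-1)$ and hoping the unknowns cancel---is both unnecessary and not carried out. As it stands your argument establishes no divisibility of $\mu$ whatsoever, so the theorem is not proved.
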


\subsection*{Conventions} 
A complex variety is a separated scheme of finite type over~$\C$. If $M$ is a compact oriented manifold of dimension $d$, we let $[M]\in H_d(M,\Z)$ be the fundamental class of~$M$ and denote by $\deg_M\colon H^{d}(M,\Z)\to\Z$ the morphism ${\omega\mapsto \deg(\omega\frown [M])}$. We let $\alpha(m)$ denote the number of ones in the binary expansion of $m$.

\section{Congruences of Chern numbers}
\label{secChern}

\subsection{The Hurewicz morphism of $\mathbf{MU}$}
\label{Hurepar}

In this paragraph, we recall the structure of the Hurewicz morphism
\begin{equation}
\label{Hurewicz}
H\colon\pi_*(\mathbf{MU})\longrightarrow H_*(\mathbf{MU},\Z)
\end{equation}
of the spectrum $\mathbf{MU}$  representing complex cobordism. 

  The computation of the cohomology of complex Grassmannians and the Thom isomorphism combine to show that $H_*(\mathbf{MU},\Z)$ is a polynomial ring with integral coefficients with one generator in degree $2i$ for each $i\geq 1$ (see \cite[Section~I.3]{Adams}).
  A deep theorem of Milnor shows that $\pi_*(\mathbf{MU})$ is also a polynomial ring with integral coefficients with one generator in degree~$2i$ for each $i\geq 1$ and that $H$ is injective (see \cite[Theorem~II.8.1 and Corollary~II.8.11]{Adams}).
  Quillen's theorem identifying $\pi_*(\mathbf{MU})$ with the coefficient ring of the universal formal group law \cite[Theorem~II.8.2]{Adams} and Hazewinkel's explicit construction of a universal formal group law \cite{Hazewinkel} allow us to be more precise. 

\begin{prop}
\label{propHaze}
For each $i\ge1$, there exist $u_{i}\in \pi_{2i}(\mathbf{MU})$ and $v_i\in H_{2i}(\mathbf{MU},\Z)$ such that
\begin{enumerate}
\item $\pi_*(\mathbf{MU})=\Z[u_i]_{i\geq 1}$; 
\item  $H_*(\mathbf{MU},\Z)=\Z[v_i]_{i\geq 1}$;
\item $H(u_i)=\lambda_iv_i$, where $\lambda_i=p$ if $i=p^t-1$ for some $t\ge1$ and some prime number $p$, and $\lambda_i=1$ otherwise.
\end{enumerate}
\end{prop}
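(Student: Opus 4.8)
The plan is to make precise the statement by combining Milnor's and Quillen's theorems with Hazewinkel's explicit formulas. Recall that by Quillen's theorem, $\pi_*(\MU)$ is canonically isomorphic to the Lazard ring $L$, the coefficient ring of the universal formal group law $F_{\mathrm{univ}}(x,y)=x+y+\sum a_{ij}x^iy^j$; and the Hurewicz morphism $H\colon\pi_*(\MU)\to H_*(\MU,\Z)=\Z[b_1,b_2,\dots]$ (where $\deg b_i=2i$ and the $b_i$ come from the standard orientation) sends the universal formal group law to the one obtained by base change along $L\to H_*(\MU,\Z)$, which is a \emph{strict isomorph} of the additive one: writing $\log(x)=x+\sum_{i\ge1}m_ix^{i+1}$ with $m_i$ a polynomial in the $b_j$ (in fact $m_i=b_i$ modulo decomposables, up to sign), one has $H(F_{\mathrm{univ}})=\exp(\log x+\log y)$ over $H_*(\MU,\Z)\otimes\Q$. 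So $H_*(\MU,\Z)\otimes\Q=\Q[m_1,m_2,\dots]$ as well.

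First I would invoke Hazewinkel's construction \cite{Hazewinkel}: there are explicit generators $u_i\in L_{2i}=\pi_{2i}(\MU)$, defined by a recursion, with $L=\Z[u_1,u_2,\dots]$, and Hazewinkel gives a closed formula for the logarithm of the associated universal formal group law. Precisely, if one sets $\ell(x)=\sum_{i\ge0}\ell_ix^{p^i\cdot(\cdots)}$ — more usefully, Hazewinkel's $p$-typical formula, assembled over all primes — the coefficients $m_i$ of $\log$ are expressed through the $u_j$ in such a way that the leading term is $m_i=\tfrac{1}{\lambda_i}$ times (a unit multiple of) $u_i$ plus a $\Z$-polynomial in $u_1,\dots,u_{i-1}$, where $\lambda_i$ is exactly the arithmetic function in the statement: $\lambda_i=p$ when $i=p^t-1$ and $\lambda_i=1$ otherwise. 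This is the content of the Hazewinkel/Araki-type formulas and is where the ``$p^t-1$'' pattern originates — it is the shape of the denominators forced by requiring the $p$-typical logarithm coefficients to generate an integral ring. Then I would set $v_i:=H(m_i)\in H_*(\MU,\Z)\otimes\Q$, and the first task is to check that $v_i$ actually lies in $H_*(\MU,\Z)$ and that $H_*(\MU,\Z)=\Z[v_1,v_2,\dots]$: since $m_i$ equals $b_i$ up to sign modulo decomposables in $H_*(\MU,\Z)$, the change of generators $b_i\leftrightarrow v_i$ is upper-triangular with $\pm1$ on the diagonal over $\Z$, hence invertible over $\Z$. This gives (b).

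Next, (a) is immediate: Hazewinkel's theorem states $\pi_*(\MU)=L=\Z[u_i]_{i\ge1}$ with his explicit $u_i$. For (c), I would apply $H$ to the relation expressing $m_i$ in terms of the $u_j$: one gets $\lambda_i v_i=\lambda_i H(m_i)=H(\lambda_i m_i)=H\bigl(u_i+(\text{decomposable correction in }u_1,\dots,u_{i-1})\bigr)$. The correction term is a polynomial in lower $u_j$'s with integer coefficients and positive degree in each monomial, hence maps under $H$ into the ideal generated by $v_1,\dots,v_{i-1}$ (using (b) and that $H$ is a ring map), i.e.\ into the decomposables of $H_*(\MU,\Z)$. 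So modulo decomposables $\lambda_i v_i\equiv H(u_i)$. To upgrade this from a congruence modulo decomposables to an exact equality $H(u_i)=\lambda_i v_i$ on the nose, I would \emph{define} $v_i$ more carefully — not as $H(m_i)$ but directly as the image $\tfrac{1}{\lambda_i}H(u_i)$, first checking this is integral. Integrality of $\tfrac{1}{\lambda_i}H(u_i)$ is precisely the prime-by-prime statement that $H(u_i)$ is divisible by $p$ in $H_*(\MU,\Z)$ exactly when $i=p^t-1$; this follows from Hazewinkel's logarithm formula reduced mod $p$, or equivalently from the classical fact (Milnor, Adams) that the cokernel of $H$ in degree $2i$ is $\Z/\lambda_i\Z$ with the indecomposable quotient detecting the primitive. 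One then checks $\{v_i\}$ so defined is still a polynomial generating set for $H_*(\MU,\Z)$, again by the upper-triangularity argument, since replacing $H(u_i)$ by $\tfrac1{\lambda_i}H(u_i)$ only rescales the diagonal entry by a unit after inverting $\lambda_i$... which we cannot do over $\Z$ — so here one must instead argue that $v_i$ and $b_i$ still agree up to sign mod decomposables, using that $H(u_i)\equiv\pm\lambda_i b_i$ mod decomposables (the known value of the Hurewicz image of the cobordism generator).

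The main obstacle is this last integrality-and-generation bookkeeping: one must simultaneously arrange that the $v_i$ are honest integral classes, that they generate $H_*(\MU,\Z)$ over $\Z$ (not merely over $\Z[\tfrac1{\lambda_i}]$), and that $H(u_i)=\lambda_i v_i$ holds exactly. The cleanest route is to fix Hazewinkel's $u_i$, compute $H(u_i)$ in the $b$-basis via the logarithm formula, read off that its primitive part is $\pm\lambda_i b_i$ and that it is divisible by exactly $\lambda_i$, set $v_i:=\tfrac1{\lambda_i}H(u_i)$, and verify by descending induction that the subring generated by $v_1,\dots,v_N$ contains $b_1,\dots,b_N$ for all $N$. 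Everything else — the polynomial structure of $\pi_*(\MU)$, the injectivity of $H$, the shape of $H_*(\MU,\Z)$ — is quoted from Adams, Milnor, Quillen, and Hazewinkel as already recalled in the text preceding the proposition.
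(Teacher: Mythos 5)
Your final ``cleanest route'' is exactly the paper's proof: take Hazewinkel's explicit generators $u_i$ of $\pi_*(\mathbf{MU})$, use his recursion formula to see that $H(u_i)$ is divisible by exactly $\lambda_i$ in $H_{2i}(\mathbf{MU},\Z)$, set $v_i:=H(u_i)/\lambda_i$, and check via the indecomposable quotient (Adams) that the $v_i$ form a polynomial generating set. One caveat: your parenthetical ``equivalently'' --- that the cokernel of $H$ in degree $2i$ is $\Z/\lambda_i$ --- is false as stated (this holds only for the induced map on indecomposables, and divisibility of the indecomposable part alone would not yield divisibility of the whole class $H(u_i)$), so the induction on Hazewinkel's explicit formula is not an optional alternative but the essential step; since you do invoke it, the argument stands.
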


\begin{proof}
Let $u_i\in \pi_{2i}(\mathbf{MU})$ be the polynomial generators of $\pi_*(\mathbf{MU})$ specified in \cite[Section~34.4.1]{Hazewinkel}. Induction on $i$ using the formula \cite[(34.4.3)]{Hazewinkel} shows that there exist $v_i\in H_{2i}(\mathbf{MU},\Z)$ such that $H(u_i)=\lambda_iv_i$. It then follows from \cite[Lemmas~II.7.9(iii) and~II.8.10]{Adams} that $v_i$ generates $H_{2i}(\mathbf{MU},\Z)$ modulo its decomposable elements. Consequently, $H_*(\mathbf{MU},\Z)=\Z[v_i]_{i\geq 1}$.
\end{proof}

For each $e\geq 1$, define the ideal $I_e\subset \pi_*(\mathbf{MU})$ to be the kernel of the composition 
$$\pi_*(\mathbf{MU})\xrightarrow{\ H \ } H_*(\mathbf{MU},\Z)\xrightarrow{\ \ \ } H_*(\mathbf{MU},\Z)/2^e$$
of $H$ and  the reduction modulo $2^e$. Working in the monomial bases associated with the $u_i$ and the $v_i$ given by Proposition~\ref{propHaze} shows at once the following result.

\begin{lem}
\label{ideal}
One has $I_1=\langle 2, u_{2^t-1}\rangle_{t\geq 1}$ and $I_e=(I_1)^e$.
\end{lem}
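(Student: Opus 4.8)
The plan is to describe both sides of the two equalities completely explicitly, working in the monomial $\Z$-bases provided by Proposition~\ref{propHaze}.\ For a multi-index $\alpha=(\alpha_i)_{i\ge1}$ of finite support, write $u^\alpha:=\prod_i u_i^{\alpha_i}$ and $v^\alpha:=\prod_i v_i^{\alpha_i}$; these are $\Z$-bases of $\pi_*(\mathbf{MU})$ and of $H_*(\mathbf{MU},\Z)$.\ As $H$ is a ring homomorphism, Proposition~\ref{propHaze}(c) gives $H(u^\alpha)=\Lambda_\alpha v^\alpha$ with $\Lambda_\alpha:=\prod_i\lambda_i^{\alpha_i}$.\ The only computation that matters is the $2$-adic valuation of $\Lambda_\alpha$: each $\lambda_i$ is $1$ or a prime, and it is even exactly when $i=2^t-1$ for some $t\ge1$ (in which case $\lambda_i=2$), so $v_2(\Lambda_\alpha)=m_\alpha$, where $m_\alpha:=\sum_{t\ge1}\alpha_{2^t-1}$ counts, with multiplicity, how many of the ``special'' generators $u_{2^t-1}$ occur in $u^\alpha$.

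Next I would pin down $I_e$ as a subgroup of $\pi_*(\mathbf{MU})$.\ Since $H_*(\mathbf{MU},\Z)$ is free on the $v^\alpha$, the quotient $H_*(\mathbf{MU},\Z)/2^e$ is free over $\Z/2^e$ on their classes; hence $\sum_\alpha c_\alpha u^\alpha$ lies in $I_e$ if and only if $2^e\mid c_\alpha\Lambda_\alpha$ for all $\alpha$, that is, if and only if $2^{\max(0,\,e-m_\alpha)}\mid c_\alpha$ for all $\alpha$.\ So $I_e$ is the free $\Z$-module on the elements $2^{\max(0,\,e-m_\alpha)}u^\alpha$.\ For $e=1$ this basis consists of the monomials $u^\alpha$ divisible by some $u_{2^t-1}$ (those with $m_\alpha\ge1$) together with the elements $2u^\alpha$ for which no $u_{2^t-1}$ divides $u^\alpha$; all of these manifestly lie in the ideal $\langle 2,u_{2^t-1}\rangle_{t\ge1}$, while conversely $2\in I_1$ and $H(u_{2^t-1})=2v_{2^t-1}$ shows $u_{2^t-1}\in I_1$.\ This gives the first assertion.

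For $I_e=(I_1)^e$, the inclusion $(I_1)^e\subseteq I_e$ is immediate: $H_*(\mathbf{MU},\Z)$ being torsion-free, $H(I_1)\subseteq 2H_*(\mathbf{MU},\Z)$, so the $H$-image of any product of $e$ elements of $I_1$ is divisible by $2^e$.\ For the reverse inclusion it suffices to check that each basis element $2^{\max(0,\,e-m_\alpha)}u^\alpha$ of $I_e$ lies in $(I_1)^e$: factoring $u^\alpha=w\cdot\prod_t u_{2^t-1}^{\alpha_{2^t-1}}$ with $w$ a monomial in the non-special generators, the second factor is a product of $m_\alpha$ special generators, and $2^{\max(0,\,e-m_\alpha)}u^\alpha$ is $w$ times a product of $\min(m_\alpha,e)$ special generators, $\max(0,e-m_\alpha)$ copies of $2$, and the leftover special generators — hence a multiple of a product of $e$ elements of $I_1$.\ I do not expect a genuine obstacle here: the whole content is the valuation identity $v_2(\Lambda_\alpha)=m_\alpha$, resting on Proposition~\ref{propHaze}(c), and the only points to record with a little care are the freeness of $H_*(\mathbf{MU},\Z)$ and of its reduction modulo $2^e$, together with the torsion-freeness invoked for $(I_1)^e\subseteq I_e$.
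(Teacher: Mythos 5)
Your proof is correct and follows exactly the route the paper indicates: the paper's entire proof is the remark that "working in the monomial bases associated with the $u_i$ and the $v_i$ given by Proposition~\ref{propHaze} shows at once" the result, and your argument is precisely that computation carried out in detail (the key point being $v_2(\Lambda_\alpha)=m_\alpha$, which identifies $I_e$ as the free module on the $2^{\max(0,e-m_\alpha)}u^\alpha$). No discrepancies to report.
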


\subsection{Chern numbers}
\label{Chernpar}

Let $\MU_*$ be the complex cobordism ring, whose degree $d$ elements are complex cobordism classes of $d$-dimensional compact stably almost complex manifolds. The Thom--Pontrjagin construction gives an identification \begin{equation}
\label{PT}
\xi\colon\MU_*\isoto \pi_*(\mathbf{MU})
\end{equation}
(apply \cite[Theorem 1.5.10]{Kochman} with $\mathfrak{B}=\BU$).

Consider the polynomial ring $\Z[c_j]_{j\geq 1}$ in the Chern classes $c_j$, graded so that $c_j$ has degree $j$. A degree~$i$ element $P\in \Z[c_j]_{j\geq 1}$ may be evaluated on a $2i$-dimensional compact stably almost complex manifold $M$ by setting $P(M)\coloneqq\deg_M(P(c_j(M)))$. 
This integer only depends on the complex cobordism class of $M$ (see \cite[Section~4.3, p.~135]{Kochman}, with $\mathfrak{B}=\BU$ and $E$ the ordinary integral cohomology spectrum), so we get a
 morphism $P\colon\MU_{2i}\to \Z$, called the Chern number associated with $P$. 
The following lemma is an immediate consequence of \cite[Proposition 4.3.8]{Kochman}.

\begin{lem}
\label{Chern}
Let $i\geq 0$. A morphism $\MU_{2i}\to\Z$ is a Chern number if and only if it may be written as $\psi\circ H\circ\xi$ for some group morphism $\psi\colon H_{2i}(\mathbf{MU},\Z)\to\Z$, where~$H$ and $\xi$ are as in~\eqref{Hurewicz} and \eqref{PT}.
\end{lem}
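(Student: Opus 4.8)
The plan is to match the Chern numbers on $\MU_{2i}$ with the maps $\psi\circ H\circ\xi$ by recognizing both families as exactly the $\Z$-valued functionals that factor through $H_{2i}(\mathbf{MU},\Z)$, the dictionary between the two descriptions being \cite[Proposition 4.3.8]{Kochman}, which computes $H\circ\xi$ in cohomological terms.

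First I would set up the linear algebra. By Proposition \ref{propHaze}, $H_*(\mathbf{MU},\Z)=\Z[v_j]_{j\geq 1}$ with $v_j$ in degree $2j$, so $H_{2i}(\mathbf{MU},\Z)$ is free abelian of finite rank (a basis being the monomials in the $v_j$ of weight $i$); in particular, since $H_*(\mathbf{MU},\Z)$ is free abelian, universal coefficients yields an isomorphism $H^{2i}(\mathbf{MU},\Z)\isoto\operatorname{Hom}(H_{2i}(\mathbf{MU},\Z),\Z)$ via the Kronecker pairing. On the other hand, the Thom isomorphism for the spectrum $\mathbf{MU}$ identifies $H^{*}(\mathbf{MU},\Z)$ with a free module of rank one over $H^{*}(\BU,\Z)=\Z[c_j]_{j\geq 1}$, generated by the Thom class $U$; thus $H^{2i}(\mathbf{MU},\Z)$ is freely spanned by the classes $P(c_j)\cupp U$ with $P\in\Z[c_j]_{j\geq 1}$ homogeneous of degree $i$. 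Combining the two identifications, every group morphism $\psi\colon H_{2i}(\mathbf{MU},\Z)\to\Z$ is of the form $\psi_P\colon\gamma\mapsto\deg\!\bigl((P(c_j)\cupp U)\frown\gamma\bigr)$ for a unique homogeneous degree-$i$ polynomial $P$, and every such $\psi_P$ arises this way.

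The key input is then \cite[Proposition 4.3.8]{Kochman}, applied (as in the definition of the Chern number associated with $P$ given above) with $\mathfrak{B}=\BU$ and $E$ the ordinary integral cohomology spectrum: it evaluates $H\circ\xi\colon\MU_{2i}\to H_{2i}(\mathbf{MU},\Z)$ by asserting that, for every $2i$-dimensional compact stably almost complex manifold $M$ and every homogeneous $P\in\Z[c_j]_{j\geq 1}$ of degree $i$,
$$P(M)=\deg_M\!\bigl(P(c_j(M))\bigr)=\deg\!\bigl((P(c_j)\cupp U)\frown H(\xi([M]))\bigr)=\psi_P\bigl(H(\xi([M]))\bigr).$$
Granting this identity, the Chern number attached to $P$ equals $\psi_P\circ H\circ\xi$, so every Chern number has the desired form; conversely, given any group morphism $\psi\colon H_{2i}(\mathbf{MU},\Z)\to\Z$, the previous paragraph writes $\psi=\psi_P$ for some homogeneous degree-$i$ polynomial $P$, and then $\psi\circ H\circ\xi$ is the Chern number associated with $P$. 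This proves both implications.

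The only genuine content lies in the cohomological formula for $H\circ\xi$, \ie in Kochman's proposition, which is essentially a naturality statement for the Thom isomorphism applied to the Thom--Pontrjagin collapse map underlying $\xi$; once it is in hand, the lemma reduces to the Thom isomorphism for $\mathbf{MU}$ and a universal-coefficients argument, which is why it counts as ``immediate''. The one point requiring a little care is that the $\psi$ in the statement is an \emph{arbitrary} group homomorphism, not a priori one induced by a cohomology class; but this is harmless because $H_{2i}(\mathbf{MU},\Z)$ is finitely generated and free, so $\operatorname{Hom}(H_{2i}(\mathbf{MU},\Z),\Z)=H^{2i}(\mathbf{MU},\Z)$ on the nose.
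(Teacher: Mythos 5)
Your proof is correct and follows the same route as the paper, which simply declares the lemma ``an immediate consequence of \cite[Proposition 4.3.8]{Kochman}''; you have merely made explicit the universal-coefficients and Thom-isomorphism bookkeeping that identifies arbitrary group morphisms $\psi\colon H_{2i}(\mathbf{MU},\Z)\to\Z$ with degree-$i$ polynomials in the Chern classes. Nothing to correct.
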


We define the Segre classes $s_i\in\Z[c_j]_{j\geq 1}$ to be the unique elements such that $s_i$ has degree~$i$ and  
\begin{equation}
\label{Segredef}
\left(\sum\nolimits_j c_j\right)\left(\sum\nolimits_i s_i\right)=1.
\end{equation}
The Chern numbers associated with $s_i$ have the following properties.

\begin{lem}\leavevmode
\label{Segre}\begin{enumerate}
\item\label{aa} If $x\in \MU_{2i}$ and $x'\in \MU_{2i'}$, then $s_{i+i'}(xx')=s_i(x)s_{i'}(x')$.
\item\label{ab} For $i\geq 0$ and $h\geq 1$, the function $s_i\colon\MU_{2i}\to\Z$ is divisible by $2^h$ if and only if $\alpha(i+h-1)>2h-2$.
\item\label{ac} For $i\geq 1$, the function $s_i\colon  \MU_{2i}\to\Z$ only takes even values and only takes values divisible by $4$ on decomposable elements.
\item\label{ad} For $i=2^t-1\geq 1$ and $u_i\in \MU_{2i}$ as in Proposition~\ref{propHaze},  $s_i(u_i)\equiv 2 \pmod 4$.\qedhere
\end{enumerate}
\end{lem}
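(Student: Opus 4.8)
The plan is to deduce all four parts from the structural results on $\pi_*(\MU)$ and $H_*(\MU,\Z)$ recalled above, translating everything through the isomorphism $\xi$ of~\eqref{PT} and the characterization of Chern numbers in Lemma~\ref{Chern}. The first step is to identify the Segre Chern number $s_i$ in cohomological terms. Since $\bigl(\sum c_j\bigr)\bigl(\sum s_i\bigr)=1$, the class $s_i$ is (up to sign) the degree-$i$ part of the inverse total Chern class, and under the standard splitting of $\MU$ via Thom spaces the corresponding element of $H^{2i}(\MU,\Z)$ is the one that pairs with the polynomial generator of $H_*(\MU,\Z)$ dual to the "projective-space" generator. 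Concretely, one checks that $s_i$ evaluated on $\MU_{2i}$ corresponds to the functional $\psi_i\colon H_{2i}(\MU,\Z)\to\Z$ that reads off the coefficient of a suitably normalized polynomial generator $v_i$ as in Proposition~\ref{propHaze}; the key input here is the classical fact (Milnor, Thom) that $s_i[\P^i]=i+1$ if $i+1$ is not a prime power and $s_i[\P^i]=p$ if $i+1=p^t$, which matches the $\lambda_i$ of Proposition~\ref{propHaze}\,(c) after accounting for the change of generators. This is the bookkeeping that underlies the whole lemma.

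Granting that normalization, part~\eqref{aa} is the Cartan-type multiplicativity of $s_i$: the total Segre class of a product is the product of the total Segre classes (immediate from~\eqref{Segredef} and Whitney), and taking the top-degree term over a product manifold $M\times M'$ gives $s_{i+i'}(M\times M')=s_i(M)s_{i'}(M')$; since $s_i$ only depends on the cobordism class this descends to $\MU_*$. For part~\eqref{ac}, I would argue that the image of $s_i\colon\MU_{2i}\to\Z$ is the subgroup generated by $s_i(u_i)$ together with $s_i$ of decomposables; by Proposition~\ref{propHaze}, $H(u_i)=\lambda_i v_i$ with $\lambda_i$ always even when $\lambda_i\neq 1$, but in fact the relevant point is that $s_i$ of \emph{any} class is even — this follows because $s_i$ factors through $H_{2i}(\MU,\Z)\to\Z$ and, on the $v_i$-basis, the value on $v_i$ itself is forced by Proposition~\ref{propHaze}\,(c) to be $\lambda_i$ times something, combined with the classical parity of $s_i[\P^i]$; decomposable classes land in products and by~\eqref{aa} pick up a further factor of $2$ from each factor, giving divisibility by $4$. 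Part~\eqref{ad} is the sharp case: for $i=2^t-1$ we have $\lambda_i=2$, and the classical computation gives $s_i[\P^i]=2$ exactly (since $i+1=2^t$), so with Hazewinkel's generator $u_i$ — which differs from $[\P^i]$ by a decomposable correction, on which $s_i$ is divisible by $4$ by~\eqref{ac} — we get $s_i(u_i)\equiv s_i[\P^i]\equiv 2\pmod 4$.

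Part~\eqref{ab} is the main obstacle and the one requiring real work. The clean way is: $s_i\colon\MU_{2i}\to\Z$ is divisible by $2^h$ as a function if and only if $\psi_i\circ H\circ\xi$ is, which — using that $\xi$ is an isomorphism and that $H$ is injective with known cokernel — amounts to asking that $\psi_i$ vanish modulo $2^h$ on the image $H(\pi_*(\MU))\subset H_*(\MU,\Z)$, \emph{i.e.} that $s_i$ vanish mod $2^h$ on $\xi^{-1}$ of the ideal generated by $2$ and the $u_{2^t-1}$ in the appropriate range. By Lemma~\ref{ideal} this image-with-$2$-adic-filtration is controlled by $I_e=(I_1)^e$ with $I_1=\langle 2,u_{2^t-1}\rangle$. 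Using multiplicativity~\eqref{aa}, $s_i$ on a monomial $u_{j_1}\cdots u_{j_k}$ (with $j_1+\dots+j_k=i$) equals $\prod s_{j_\ell}(u_{j_\ell})$, which by~\eqref{ac}–\eqref{ad} is divisible by $2^k$ and by $2$ for each factor with $j_\ell+1$ not a power of $2$. So the worst case — the least divisible monomial of degree $i$ — is a product of generators $u_{2^{t_\ell}-1}$ with $\sum(2^{t_\ell}-1)=i$; writing $i=\sum 2^{t_\ell}-k$, the number $k$ of factors can be as small as forced by the binary expansion, and a short combinatorial argument (adding up $k$ powers of two to reach $i+k$, minimizing $k$) shows the minimal such $k$ is exactly $\alpha(i+k)$ at the optimum and that $s_i$ is divisible by $2^h$ precisely when every admissible partition forces at least $h$ factors' worth of $2$'s, which unwinds to the stated condition $\alpha(i+h-1)>2h-2$. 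Pinning down this last equivalence — matching "minimal number of two-powers summing to $i$ plus their count" against $\alpha(i+h-1)$ and the threshold $2h-2$ — is the delicate combinatorial heart of the proof; I would handle it by induction on $h$, the base case $h=1$ being part~\eqref{ac} (every value even, \emph{i.e.} $\alpha(i)>0$ for $i\geq 1$), and the inductive step peeling off one generator $u_{2^t-1}$ and using $\alpha$-additivity under carry-free addition.
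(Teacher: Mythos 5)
There is a genuine gap, concentrated in parts \ref{ab} and \ref{ac}. Your reduction of \ref{ab} to the values $s_j(u_j)$ on polynomial generators via multiplicativity is correct as far as it goes, but the argument then needs the exact $2$-adic valuation of $s_j(u_j)$ for \emph{every} $j$, not only for $j=2^t-1$; nothing in your sketch supplies this, and it is precisely the content of the Rees--Thomas theorem that the paper cites (\cite{RT}, Theorem 3, in the formulation of \cite{approx}, Theorem 3.4 and Corollary 3.5). Your proposed mechanism for the evenness in \ref{ac} --- that the functional computing $s_i$ picks up a factor of $2$ from $\lambda_i$ in $H(u_i)=\lambda_iv_i$ --- fails for most $i$: for instance $\lambda_4=5$, yet $s_4$ only takes even values. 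The ``if'' direction of \ref{ab} (every value divisible by $2^h$) likewise cannot follow from knowing only that each $s_{j_\ell}(u_{j_\ell})$ is even, since it must also constrain the single indecomposable generator $u_i$ itself. So the delicate combinatorial step you defer at the end is not merely delicate: the input it would need (lower bounds on $v_2(s_j(u_j))$ for all $j$) is the theorem being proved.

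A second problem is that the normalization facts you invoke belong to a different characteristic number. The Newton polynomial $s_n^{\mathrm{Newton}}$ satisfies $s_n^{\mathrm{Newton}}[\P^n]=n+1$, vanishes on decomposables, and detects polynomial generators via the prime-power criterion; the paper's $s_i$, defined by \eqref{Segredef}, is instead multiplicative and satisfies $s_i[\P^i]=\pm\binom{2i}{i}$. This makes your argument for \ref{ad} false as written: $s_3[\P^3]=\pm 20\equiv 0\pmod 4$ rather than $2$, and $[\P^3]$ is not a polynomial generator of $\MU_6$ (its Newton number is $4$, not $\pm2$), so $u_i-[\P^i]$ need not be decomposable. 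Part \ref{ad} can be repaired using only \ref{ab} and \ref{ac}, which is what the paper does: for $i=2^t-1$ one has $\alpha(i+1)=1\not>2$, so by the case $h=2$ of the Rees--Thomas theorem not all values of $s_i$ are divisible by $4$; since decomposables contribute multiples of $4$ and $\MU_{2i}=\Z u_i\oplus(\text{decomposables})$, necessarily $s_i(u_i)\equiv 2\pmod 4$. For the record, the paper does not reprove \ref{aa}--\ref{ac} at all: \ref{aa} is quoted from \cite{approx}, Lemma 3.3, and \ref{ab}, \ref{ac} from \cite{RT}; only your part \ref{aa} (Whitney formula plus evaluation on a product) matches a complete argument.
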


\begin{proof}
For assertion~\ref{aa}, see \cite[Lemma 3.3]{approx}. Assertions~\ref{ab} and~\ref{ac} follow from a theorem of Rees and Thomas (\cite[Theorem~3]{RT}; see \cite[Theorem 3.4 and Corollary 3.5]{approx} for these exact statements). The same result of Rees and Thomas (\cite[Theorem~3]{RT} applied with~$r=0$ and $n=2^{t}-1$) shows that not all the values of $s_i\colon  \MU_{2i}\to\Z$ are divisible by $4$ if $i=2^t-1$. This fact combined with~\ref{ac} implies~\ref{ad}.
\end{proof}

\subsection{A congruence result for the top Segre class}

The next proposition is the main goal of this section. It simultaneously generalizes the theorem of Rees and Thomas recalled in Lemma~\ref{Segre}\ref{ab} (when $e=0$) and \cite[Theorem 3.6]{approx} (when $e=1$).

\begin{prop}
\label{cong}
Fix $e\geq 0$, $h\geq 1$, and $i\geq 1$. The following assertions are equivalent:
\begin{enumerate}[{\rm (i)}]
\item\label{ba} There exists a degree $i$ element $Q\in \Z[c_j]_{j\geq 1}$ such that the Chern number $$s_i+2^hQ\colon \MU_{2i}\longrightarrow\Z$$ only takes values divisible by $2^{e+h}$. 
\item\label{bb} One has $\alpha(i+e+h-1)>e+2h-2$.
\end{enumerate}
\end{prop}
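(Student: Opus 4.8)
The plan is to reduce the statement to a purely algebraic question about the ideals $I_e \subset \pi_*(\MU)$ introduced before Lemma~\ref{ideal}, using Lemma~\ref{Chern} to translate the existence of the Chern number $s_i+2^h Q$ with values divisible by $2^{e+h}$ into a statement about the Hurewicz image. Concretely, by Lemma~\ref{Chern} a morphism $\MU_{2i}\to\Z$ is a Chern number iff it factors as $\psi\circ H\circ\xi$. So assertion~\eqref{ba} says: there is $\psi\colon H_{2i}(\MU,\Z)\to\Z$ such that $\psi\circ H\circ\xi$ agrees with $s_i$ modulo $2^h$ (the Chern number $2^h Q$ is automatically a multiple of $2^h$, and conversely any multiple of $2^h$ among Chern numbers is $2^h$ times a Chern number since Chern numbers form a saturated subgroup — this needs a small check via the $v_i$-basis) and, moreover, $\psi\circ H\circ\xi$ takes values in $2^{e+h}\Z$. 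The latter exactly says that $\psi$ kills $H(\xi(I_{e+h}'))$ for the relevant submodule; more precisely, after dividing by $2^h$, that the induced functional on $H_{2i}(\MU,\Z)/2^e\cdot(\text{stuff})$ is well-defined. The cleanest formulation: condition~\eqref{ba} holds iff the functional $x\mapsto s_i(\xi^{-1}(x)) \bmod 2^{e+h}$ on $\pi_{2i}(\MU)$ vanishes on $I_{e+h}$ modulo the subgroup of elements where $s_i$ is already forced divisible — I would pin down the exact bookkeeping here.

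The key computational input is then: $s_i$ restricted to $I_{e+h} = (I_1)^{e+h}$, where $I_1 = \langle 2, u_{2^t-1}\rangle$. An element of $(I_1)^{e+h}$ is a sum of products of $e+h$ factors each of which is $2$ or some $u_{2^{t}-1}$. Using the multiplicativity of Segre classes (Lemma~\ref{Segre}\ref{aa}), a monomial $2^{a}\, u_{2^{t_1}-1}\cdots u_{2^{t_b}-1}\cdot(\text{other generators})$ of total degree $i$ with $a+b = e+h$ has Segre number equal to $2^a$ times $\prod_j s_{2^{t_j}-1}(u_{2^{t_j}-1})$ times $s$ of the remaining factor. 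By Lemma~\ref{Segre}\ref{ad} each $s_{2^{t_j}-1}(u_{2^{t_j}-1})\equiv 2\pmod 4$, and by \ref{ac} the $s$ of any positive-degree complex cobordism class is even (and divisible by $4$ if decomposable). So I want to extract the largest power of $2$ guaranteed to divide $s_i$ on all of $I_{e+h}$, and the largest power that is actually achieved. The $2$-adic valuation of $s_i$ on such a monomial is at least $a + b + (\text{extra from }\ref{ac}\text{ on the leftover degree})$; the leftover has degree $i - (e+h) - \sum(2^{t_j}-2)$ roughly, and the constraint $\alpha(\,\cdot\,) > \,\cdot\,$ in~\eqref{bb} is precisely the condition, via Rees--Thomas (Lemma~\ref{Segre}\ref{ab}), that this leftover Segre number is divisible by a high enough power of $2$.

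Here is where I expect the real work. I would treat the two directions asymmetrically. For \eqref{bb}$\Rightarrow$\eqref{ba}: assuming $\alpha(i+e+h-1)>e+2h-2$, I must produce $Q$. The natural candidate is to take $\psi$ to be a suitable functional on $H_{2i}(\MU,\Z)=\Z[v_j]$ vanishing on all monomials except a chosen one, normalized so that $\psi\circ H\circ\xi$ equals $s_i$ plus $2^h$ times something integral — the existence of such $\psi$ reduces to Lemma~\ref{ideal} plus Lemma~\ref{Segre}\ref{ab} applied to the appropriate sub-degree, checking that all the "bad" monomials (those on which $s_i$ is not automatically divisible by $2^{e+h}$) are multiples of $2^h$ after the correction, which is where the inequality is used. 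For \eqref{ba}$\Rightarrow$\eqref{bb}: if \eqref{bb} fails, i.e.\ $\alpha(i+e+h-1)\le e+2h-2$, I must exhibit an element $x\in\MU_{2i}$ on which $s_i(x)+2^hQ(x)$ is not divisible by $2^{e+h}$ for \emph{any} $Q$; equivalently an element of $I_{e+h}$ (or an appropriate coset) on which the $2$-adic valuation of $s_i$ is exactly $e+h-1$. One builds this from $2^{e+h-1}$ times an element where Rees--Thomas says $s$ is $2$ but not $4$ (degree $2^t-1$), or from a product of $u_{2^{t_j}-1}$'s realizing $i+e+h-1$ with few binary digits — the combinatorial optimization of writing $i$ as such a product-degree while minimizing the forced $2$-valuation is the crux, and it should mirror exactly the extremal analysis in the Rees--Thomas proof and in \cite[Theorem 3.6]{approx}. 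The main obstacle, therefore, is the bookkeeping that converts "$\alpha(i+e+h-1)$ vs.\ $e+2h-2$" into the sharp $2$-adic valuation of $s_i$ on $I_{e+h}$, simultaneously for the lower bound (all elements) and for exhibiting an extremal witness; once that dictionary is set up cleanly, both implications fall out of Lemmas~\ref{ideal}, \ref{Chern}, and \ref{Segre}.
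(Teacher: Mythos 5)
Your overall strategy---translating assertion~(i) via Lemma~\ref{Chern} into a statement about the Hurewicz map and the ideals $I_\bullet$, then exploiting Lemma~\ref{ideal} together with the multiplicativity and mod-$4$ properties of Segre numbers from Lemma~\ref{Segre}---is the same as the paper's. But your central reduction lands on the wrong ideal, and the criterion you write down is vacuous. You propose that~(i) should amount to the vanishing of $s_i \bmod 2^{e+h}$ on $I_{e+h}=(I_1)^{e+h}$. By Lemma~\ref{Segre}\ref{aa}, a generator $2^a\,u_{2^{t_1}-1}\cdots u_{2^{t_b}-1}\cdot y$ of $(I_1)^{e+h}$ with $a+b=e+h$ has $s_i$ equal to $2^a\prod_k s_{2^{t_k}-1}(u_{2^{t_k}-1})\cdot s_j(y)$, and each factor $s_{2^{t_k}-1}(u_{2^{t_k}-1})$ has $2$-adic valuation exactly $1$ by Lemma~\ref{Segre}\ref{ad}; hence $s_i$ is \emph{automatically} divisible by $2^{e+h}$ on all of $(I_1)^{e+h}$, so your criterion would make~(i) hold unconditionally, which is false. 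The correct reduction first notes that both~(i) and~(ii) force $s_i$ to be divisible by $2^h$ everywhere, and then reformulates~(i) as: the function $s_i/2^h \bmod 2^e$ coincides with a Chern number; by Lemma~\ref{Chern}, freeness of $\MU_{2i}$, and the injectivity of $\Z/2^e$ as a module over itself, this is equivalent to $s_i/2^h \bmod 2^e$ vanishing on $(I_e)_{2i}$, i.e.\ to $s_i$ being divisible by $2^{e+h}$ on $(I_e)_{2i}=((I_1)^{e})_{2i}$. There the $e$ ideal factors contribute exactly $2^e$, and the condition becomes: $s_j$ is divisible by $2^h$ for every $j=i-\sum_{k=1}^{e'}(2^{t_k}-1)$ with $e'\le e$, which Lemma~\ref{Segre}\ref{ab} converts into~(ii). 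Note also that your auxiliary claim that the Chern numbers form a saturated subgroup of the group of homomorphisms $\MU_{2i}\to\Z$ is false (already for $i=1$: every Chern number sends $u_1$ to an even integer since $H(u_1)=2v_1$, yet $c_1(u_1)\equiv 2\pmod 4$, so $c_1/2$ is integer-valued but is not a Chern number); fortunately the correct argument never needs it.

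Beyond this, the two places where the real content lies are left open in your sketch: the extension step needed to pass from ``vanishes on $(I_e)_{2i}$'' back to ``factors through $H_{2i}(\mathbf{MU},\Z)/2^e$'' (which uses that $\Z/2^e$ is an injective module over itself), and the combinatorial equivalence between the family of inequalities $\alpha(j+h-1)>2h-2$, over all decompositions $i=\sum_{k}(2^{t_k}-1)+j$ with at most $e$ summands, and the single inequality $\alpha(i+e+h-1)>e+2h-2$. You correctly identify the latter as the crux but do not carry it out, and the extremal witnesses you propose for the implication (i)$\Rightarrow$(ii) are sought inside $I_{e+h}$, where, as explained above, no element can violate the required divisibility. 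As written, the proposal therefore has a genuine gap at its central step.
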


\begin{proof}
Assertion~\ref{ba} implies that $s_i\colon \MU_{2i}\to \Z$ is divisible by $2^h$. So does assertion~\ref{bb} by Lemma~\ref{Segre}(b). We may thus assume that the function $s_i\colon \MU_{2i}\to \Z$ is indeed divisible by~$2^h$.

 Identify $\MU_{2i}$ and $\pi_{2i}(\mathbf{MU})$ using~\eqref{PT}. Consider the following statements:
\begin{enumerate}
\item\label{ca} The function $\frac{s_i}{2^h}\colon \MU_{2i}\to\Z$ coincides modulo $2^e$ with a Chern number.
\item\label{cb} The function $\frac{s_i}{2^h}\colon \MU_{2i}\to\Z/2^e$ factors through $H_{2i}(\mathbf{MU},\Z)$ via $H$.
\item\label{cc} The function $\frac{s_i}{2^h}\colon \MU_{2i}\to\Z/2^e$ factors through $H_{2i}(\mathbf{MU},\Z)/2^e$ via $H$.
\item\label{cd} The function $\frac{s_i}{2^h}\colon \MU_{2i}\to\Z/2^e$ vanishes on $(I_e)_{2i}$.
\item\label{ce} The function $s_i\colon \MU_{2i}\to\Z$ only takes values divisible by $2^{e+h}$ on $(I_e)_{2i}$.
\item\label{cf} If $i=(\sum_{k=1}^{e'}2^{t_k}-1)+j$ for some $e'\leq e$, then  $s_j\colon\MU_{2j}\to\Z$ is divisible by $2^h$.
\end{enumerate}

The equivalence~\ref{ba}$\Leftrightarrow$\ref{ca} is clear, and~\ref{ca}$\Leftrightarrow$\ref{cb} follows from Lemma~\ref{Chern}. The equivalence~\ref{cb}$\Leftrightarrow$\ref{cc} holds because $\MU_{2i}$ is $\Z$-free. The implication~\ref{cc}$\Rightarrow$\ref{cd} is a consequence of the definition of $I_e$, and the converse holds because $\Z/2^e$ is an injective $\Z/2^e$\nobreakdash-module \cite[Exercise~2.3.1]{Weibel}.  The equivalence~\ref{cd}$\Leftrightarrow$\ref{ce} is elementary. As for~\ref{ce}$\Leftrightarrow$\ref{cf}, it is a consequence of the description of $I_e$ given in Lemma~\ref{ideal} and of the properties of the Segre classes given in Lemma~\ref{Segre}\ref{aa} and~\ref{ad}. Finally, applying Lemma~\ref{Segre}\ref{ab} shows the equivalence~\ref{cf}$\Leftrightarrow$\ref{bb}.
\end{proof}

\section{Smooth cycles on abelian varieties}
\label{secCycles}

\subsection{Complex cobordism}
\label{parMU}

  We denote by $\MU_*(X)$ the complex cobordism homology theory
represented by the spectrum $\mathbf{MU}$, evaluated on a topological space $X$. The group $\MU_d(X)$ has a geometric interpretation as the group of complex cobordism classes of continuous maps $f\colon M\to X$, where $M$ is a $d$-dimensional compact stably almost complex manifold (see for instance \cite[Proposition 12.35]{Switzer}). We denote by $[f]\in \MU_d(X)$ the class of $f$. When $X$ is a point, one recovers the ring $\MU_*=\pi_*(\mathbf{MU})$ which we studied in Sections~\ref{Hurepar} and~\ref{Chernpar}. In general, the group $\MU_*(X)$ is naturally an $\MU_*$-module.

If $f\colon M\to X$ is as above and if $P\in\Z[c_j]_{j\geq 1}$ of degree $p$ and $\omega\in H^{d-2p}(X,\Z)$ are given,  the integer $\deg_M \big(P(c_j(M))\cdot f^*\omega\big)$ only depends on $[f]\in \MU_d(X)$ (the argument found in \cite[Section~17, p.~54]{Conner} for unoriented cobordism also works for complex cobordism). It is called the \textit{characteristic number} of $f$ associated with $P$ and $\omega$.

Fix a point $o\in\bS^1$. The compatibility of the homology theory $\MU_*$ with suspension shows that $\MU_*(\bS^1)$ is free of rank $2$ over $\MU_*$, generated by the classes of the inclusion $\{o\}\to\bS^1$ and of the identity $\bS^1\to\bS^1$ (where $\bS^1$ is endowed with the stably almost complex structure induced by a trivialization of its tangent bundle). 
For $N\geq 0$  and $E\subset\{1,\dots,N\}$, we consider the inclusion $f_E\colon T_E\hookrightarrow(\bS^1)^N$ of the subtorus $T_E\coloneqq\prod_{e\in E}\bS^1\times\prod_{e\in\{1,\dots,N\}\setminus E}\{o\}$.

\begin{lem}
\label{Kunneth}
Fix $N\geq 1$. When $E$ describes the set of all subsets of $\{1,\dots,N\}$,
\begin{enumerate}
\item\label{ka} the $f_{E,*}[T_E]\in H_{|E|}((\bS^1)^N,\Z)$ form an additive basis of $H_*((\bS^1)^N,\Z)${\rm ;}
\item\label{kb} the $[f_E]\in\MU_{|E|}((\bS^1)^N)$ form an $\MU_*$-basis of\, $\MU_*((\bS^1)^N)$.
\end{enumerate}
\end{lem}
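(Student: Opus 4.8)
The plan is to argue by induction on $N$, the case $N=1$ being immediate: assertion~\ref{ka} is the computation of $H_*(\bS^1,\Z)$, with $f_{\{1\},*}[T_{\{1\}}]$ the fundamental class $[\bS^1]$ and $f_{\emptyset,*}[T_\emptyset]$ the class of a point, while assertion~\ref{kb} is exactly the description of the $\MU_*$-module $\MU_*(\bS^1)$ recalled just above the statement.\ For the inductive step I would split off the last circle factor, $(\bS^1)^N=(\bS^1)^{N-1}\times\bS^1$, and run a K\"unneth argument compatible with cross products.

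For~\ref{ka} this is the classical K\"unneth theorem with $\Z$-coefficients, applicable since $H_*(\bS^1,\Z)$ is $\Z$-free; it yields $H_*((\bS^1)^{N-1},\Z)\otimes_\Z H_*(\bS^1,\Z)\isoto H_*((\bS^1)^N,\Z)$, $a\otimes b\mapsto a\times b$.\ For~\ref{kb} I would use that $\MU_*(\bS^1)$ is free, hence flat, over $\MU_*$ by the case $N=1$ of~\ref{kb}: the K\"unneth spectral sequence for the multiplicative homology theory $\MU_*$ then degenerates and produces an $\MU_*$-linear isomorphism $\MU_*((\bS^1)^{N-1})\otimes_{\MU_*}\MU_*(\bS^1)\isoto\MU_*((\bS^1)^N)$, again $a\otimes b\mapsto a\times b$.\ (One can bypass spectral sequences via the stable splitting $(\bS^1)^N_+\simeq\bigvee_{E\subseteq\{1,\dots,N\}}\bS^{|E|}$ of the $N$-torus into a wedge of spheres, which directly exhibits $\MU_*((\bS^1)^N)$ as $\MU_*$-free with a basis indexed by the subsets $E$, and likewise for~\ref{ka}.)

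In either case the inductive hypothesis gives a basis of the source of the K\"unneth map: the elements $f_{E',*}[T_{E'}]\otimes c$, resp.\ $[f_{E'}]\otimes[g]$, with $E'$ running over the subsets of $\{1,\dots,N-1\}$ and $c\in\{f_{\emptyset,*}[T_\emptyset],\,f_{\{N\},*}[T_{\{N\}}]\}$, resp.\ $g\in\{(\{o\}\hookrightarrow\bS^1),\,\mathrm{id}_{\bS^1}\}$.\ It remains to identify their images under the cross product.\ Since, by construction, $f_E$ is the product over the $N$ circle factors of the maps $\{o\}\hookrightarrow\bS^1$ (for $e\notin E$) and $\mathrm{id}_{\bS^1}$ (for $e\in E$), one has $T_{E'}\times\{o\}=T_{E'}$ and $T_{E'}\times\bS^1=T_{E'\cup\{N\}}$ as submanifolds of $(\bS^1)^N$, compatibly with the respective inclusion maps; combining this with the multiplicativity of the homology cross product and of the external product in complex cobordism (which sends a pair of classes of maps to the class of their product) shows that the above basis maps bijectively onto $\{f_{E,*}[T_E]\}_{E\subseteq\{1,\dots,N\}}$, resp.\ $\{[f_E]\}_{E\subseteq\{1,\dots,N\}}$, which completes the induction.

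The only non-formal ingredient is the K\"unneth isomorphism for $\MU_*$.\ It is standard, but I would emphasize that it rests on the flatness of $\MU_*(\bS^1)$ over $\MU_*$, i.e.\ on assertion~\ref{kb} for $N=1$; this is why the induction must be set up so that the case of smaller $N$ feeds the K\"unneth step.\ Everything else is bookkeeping: matching the K\"unneth basis elements with the classes $f_{E,*}[T_E]$ and $[f_E]$ via the product description of $f_E$.
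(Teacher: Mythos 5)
Your proof is correct and is essentially the paper's argument: the paper simply invokes the K\"unneth formula in ordinary homology for~(a) and the K\"unneth formula in complex cobordism (Switzer, Theorem~13.75~i), applied with $E=\mathbf{MU}$) for~(b), the latter resting exactly on the freeness of $\MU_*(\bS^1)$ over $\MU_*$ recalled just before the lemma. Your induction on $N$ and the explicit matching of K\"unneth basis elements with the classes $f_{E,*}[T_E]$ and $[f_E]$ only spell out what the paper leaves implicit.
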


\begin{proof}
The first assertion follows from the K\"unneth formula in ordinary homology and the second assertion from the K\"unneth formula in complex cobordism (apply \cite[Theorem~13.75~i)]{Switzer} with ${E=\mathbf{MU}}$).
\end{proof}

\begin{lem}
\label{characteristic}
Let $x\in\MU_{2i}$, and let $E\subset \{1,\dots,N\}$ be such that $|E|=d-2i$. Fix $P\in\Z[c_j]_{j\geq 1}$ of degree~$l$ and $\omega\in H^{d-2l}((\bS^1)^N,\Z)$. 
The characteristic number of $x\cdot[f_E]\in\MU_{d}((\bS^1)^N)$ associated with~$P$ and $\omega$ is equal to~$0$ if $l\neq i$ and to $P(x)\deg_{(\bS^1)^N}\big(f_{E,*}(1)\cdot\omega\big)$ if $l=i$.
\end{lem}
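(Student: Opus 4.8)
The plan is to compute the characteristic number directly, reducing everything to the product structure on $\MU_*((\bS^1)^N)$ and the multiplicativity of characteristic numbers under the $\MU_*$-module structure. First I would recall that $x\cdot[f_E]$ is, geometrically, represented by the map $\mathrm{pr}\colon M_x\times T_E\to T_E\hookrightarrow(\bS^1)^N$, where $M_x$ is a $2i$-dimensional compact stably almost complex manifold representing $x\in\MU_{2i}$ and the stably almost complex structure on $M_x\times T_E$ is the product one. This uses only the definition of the $\MU_*$-module structure on $\MU_*((\bS^1)^N)$ together with the fact that $[f_E]$ is represented by the inclusion of the submanifold $T_E$.

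Next I would unwind the definition of the characteristic number associated with $P$ (of degree $l$) and $\omega\in H^{d-2l}((\bS^1)^N,\Z)$ evaluated on this map. By definition it equals $\deg_{M_x\times T_E}\big(P(c_j(M_x\times T_E))\cdot(f_E\circ\mathrm{pr})^*\omega\big)$. Since the tangent bundle of $M_x\times T_E$ splits (after stabilization) as the external sum of $T_{M_x}$ and the trivial bundle on $T_E$, the total Chern class is $c(M_x\times T_E)=\mathrm{pr}_{M_x}^*c(M_x)$. Hence $P(c_j(M_x\times T_E))=\mathrm{pr}_{M_x}^*\big(P(c_j(M_x))\big)$, a class pulled back from the degree-$2l$ cohomology of $M_x$. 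I would then apply the Künneth/projection-formula computation of the degree on a product: writing $\omega$ via Lemma~\ref{Kunneth}\ref{ka} and using that $(f_E\circ\mathrm{pr})^*\omega=\mathrm{pr}_{T_E}^*(f_E^*\omega)$ is pulled back from $T_E$, the cup product $\mathrm{pr}_{M_x}^*(P(c_j(M_x)))\cdot\mathrm{pr}_{T_E}^*(f_E^*\omega)$ pairs against $[M_x\times T_E]=[M_x]\times[T_E]$ to give $\deg_{M_x}(P(c_j(M_x)))\cdot\deg_{T_E}(f_E^*\omega)$, with the convention that a class pairs to $0$ against a fundamental class of the wrong dimension.

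Now the dimension bookkeeping finishes it. The class $P(c_j(M_x))$ lives in $H^{2l}(M_x,\Z)$ while $[M_x]\in H_{2i}(M_x,\Z)$, so $\deg_{M_x}(P(c_j(M_x)))=0$ unless $l=i$, in which case it is exactly $P(x)$ by the definition of the Chern number $P\colon\MU_{2i}\to\Z$ from Section~\ref{Chernpar}. When $l\ne i$ the whole characteristic number therefore vanishes. When $l=i$ we get $P(x)\cdot\deg_{T_E}(f_E^*\omega)$; it remains only to identify $\deg_{T_E}(f_E^*\omega)$ with $\deg_{(\bS^1)^N}\big(f_{E,*}(1)\cdot\omega\big)$, which is the projection formula $\deg_{T_E}(f_E^*\omega)=\deg_{(\bS^1)^N}\big(f_{E,*}(f_E^*\omega\frown[T_E])\big)=\deg_{(\bS^1)^N}\big((f_{E,*}[T_E])\frown\omega\big)$, where $f_{E,*}(1)$ denotes $f_{E,*}[T_E]$ in the notation of the previous lemma. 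This yields the stated formula.

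The main obstacle I anticipate is purely one of care rather than depth: making sure the stably almost complex structures are handled correctly so that the Chern class of the product is genuinely pulled back from $M_x$ (one must invoke the compatibility of the chosen trivialization of $T_{\bS^1}$ with the product structure, exactly as set up before Lemma~\ref{Kunneth}), and tracking the degree conventions so that mismatched dimensions cleanly contribute $0$. There is also a small point in checking that the characteristic number, which a priori depends only on the cobordism class $x\cdot[f_E]$, may indeed be computed on the specific representative $M_x\times T_E\to(\bS^1)^N$; this is immediate from the invariance statement recalled in Section~\ref{parMU}. None of these steps requires more than the results already in the excerpt.
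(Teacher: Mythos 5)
Your proof is correct and follows essentially the same route as the paper: represent $x\cdot[f_E]$ by the map $M_x\times T_E\to(\bS^1)^N$, use the stable triviality of the tangent bundle of $T_E$ to see that $P(c_j(M_x\times T_E))$ is pulled back from $M_x$, and then split the degree on the product via the projection formula to get $0$ when $l\neq i$ and $P(x)\deg_{(\bS^1)^N}\big(f_{E,*}(1)\cdot\omega\big)$ when $l=i$. The two points you flag as needing care (the product stably almost complex structure and the cobordism invariance of the characteristic number) are exactly the ones the paper handles, the latter by the reference to Conner's argument.
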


\begin{proof}
Let $M$ be a $2i$-dimensional compact stably almost complex manifold representing $x\in\MU_{2i}$. Let $g_E\colon M\times T_E\to X$ be the composition of the second projection and $f_E\colon T_E\to X$. 
We will also let $h_E\colon M\times T_E\to M$ denote the first projection. 
 Since the stably almost complex structure on the tangent bundle on the torus $T_E$ is stably trivial, one has $c_j(T_E)=0$ for $j>0$. It follows from the Whitney sum formula that $P(c_j(M\times T_E))=h_E^*P(c_j(M))\in H^{2l}(M\times T_E,\Z)$. As a consequence, $P(c_j(M\times T_E))\cdot g_E^*\omega\in H^d(M\times T_E,\Z)$ vanishes unless $l=i$. When $l=i$, we use the projection formula to compute 
 \begin{align*}
\deg_{M\times T_E}\left(P(c_j(M\times T_E))\cdot g_E^*\omega\right) & = \deg_{M\times T_E}\left(h_E^*P(c_j(M))\cdot g_E^*\omega\right) \\
&=\deg_{M}\left(P(c_j(M))\right)\deg_{T_E}\left(f_E^*\omega\right)\\
&  =P(x) \deg_{(\bS^1)^N}\left(f_{E,*}(1)\cdot\omega\right).\qedhere
\end{align*}
\end{proof}

\subsection{Complex cobordism of abelian varieties}
\label{parMUab}

Now let $X$ be a complex abelian variety of dimension $n$ with a principal polarization $\theta\in H^2(X,\Z)$. We identify $X$ and $(\bS^1)^N$ for $N=2n$ by means of a Lie group isomorphism $X\simeq (\bS^1)^{N}$. 
By Lemma~\ref{Kunneth}\ref{ka}, there exists for each~$k\geq 0$ a unique $\Z$-linear combination 
\begin{equation}
\label{tau}
\tau_k=\sum_{\substack{E\subset \{1,\dots,N\}\\|E|=2k}}\mu_E[f_E]\in \MU_{2k}(X)
\end{equation}
 such that $\sum_{|E|=2k}\mu_Ef_{E,*}[T_E]\in H_{2k}(X,\Z)$ is Poincar\'e-dual to the integral class $\frac{\theta^{n-k}}{(n-k)!}$ or,
in other words, such that
\begin{equation}
\label{mu}
\sum_{|E|=2k}\mu_Ef_{E,*}(1)=\frac{\theta^{n-k}}{(n-k)!}.
\end{equation}

\begin{prop}
\label{MUppav}
Let $(X,\theta)$ be a principally polarized complex abelian variety of dimension~$n$. Assume that the group $\Hdg^{2k}(X,\Z)$ of Hodge classes is generated by $\frac{\theta^{k}}{k!}$ for each $k\geq 0$, and let $\tau_k\in \MU_{2k}(X)$ be as in~\eqref{tau}. 
Let $f\colon Y\to X$ be a morphism of smooth projective varieties with~$Y$ of pure dimension~$d$. Then there exists, for each $i\in\{0,\dots,d\}$, an element $x_i\in\MU_{2i}$ such that 
\begin{equation}
\label{classinMU}
[f]=\sum_{i=0}^d x_{i}\cdot \tau_{d-i}\in \MU_{2d}(X). 
\end{equation}
\end{prop}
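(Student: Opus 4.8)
The plan is to reduce the statement to an assertion about the complex cobordism group $\MU_{2d}(X)$ together with Hodge-theoretic control of its ordinary homology. Using the Lie group isomorphism $X\simeq(\bS^1)^N$ with $N=2n$, Lemma~\ref{Kunneth}\ref{kb} tells us that $\MU_*(X)$ is free over $\MU_*$ on the classes $[f_E]$ for $E\subset\{1,\dots,N\}$. Hence $[f]\in\MU_{2d}(X)$ can be written uniquely as $[f]=\sum_E a_E[f_E]$ with $a_E\in\MU_{2d-|E|}$, where $a_E=0$ unless $|E|\le 2d$ and $|E|\equiv 2d\pmod 2$, i.e.\ $|E|$ even, say $|E|=2j$ with $0\le j\le d$. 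Comparing with the desired formula $[f]=\sum_{i=0}^d x_i\cdot\tau_{d-i}$, the element $\tau_{d-i}$ is by definition a $\Z$-linear combination $\sum_{|E|=2(d-i)}\mu_E[f_E]$, so I must show that, for each fixed $j$, the coefficients $(a_E)_{|E|=2j}$ are an $\MU_*$-multiple of the integral vector $(\mu_E)_{|E|=2j}$ attached to $\tau_j$; concretely, that there is $x_{d-j}\in\MU_{2(d-j)}$ with $a_E=x_{d-j}\cdot\mu_E$ for all $E$ with $|E|=2j$.

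The mechanism forcing this proportionality is that the ordinary homology class $f_*[Y]\in H_{2d}(X,\Z)$ is a Hodge class, hence, by hypothesis, an integer multiple of $\frac{\theta^{d}}{d!}$ — and more relevantly, all the characteristic numbers of $f$ are controlled by a single integer in each degree. First I would apply the ring homomorphism $\MU_*\to\Z$ sending every positive-degree class to $0$ (equivalently, pass from $\MU_*(X)$ to $H_*(X,\Z)$ via the natural transformation $\MU_*(X)\to H_*(X,\Z)$). This kills all $a_E$ with $|E|<2d$ and sends $[f]$ to $f_*[Y]$; since $f_*[Y]$ is a Hodge class it equals $\lambda\frac{\theta^{d}}{d!}$ for some $\lambda\in\Z$ by the generation hypothesis, so the top-degree coefficients $(a_E)_{|E|=2d}$ equal $\lambda(\mu_E)_{|E|=2d}$, giving $x_0=\lambda\cdot 1$. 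For the lower strata one argues by (downward) induction on $d-j$: having peeled off the terms with larger $|E|$, the remaining class $[f]-\sum_{i<d-j}x_i\tau_{d-i}$ lies in the $\MU_*$-submodule spanned by the $[f_E]$ with $|E|\le 2j$, and its "leading part" in degree $2j$ is detected by characteristic numbers of the form $\deg_Y(P(c_\bullet(Y))\cdot f^*\omega)$ with $\deg P=d-j$ and $\omega\in H^{2j}(X,\Z)$. By Lemma~\ref{characteristic} this characteristic number, evaluated on $x_{d-j}\cdot[f_E]$, is $P(x_{d-j})\deg_X(f_{E,*}(1)\cdot\omega)$; and by \eqref{mu} the vector $\bigl(\deg_X(f_{E,*}(1)\cdot\omega)\bigr)_{|E|=2j}$ is, up to the choice of $\omega$, exactly the pairing of $\frac{\theta^{n-j}}{(n-j)!}$ against $\omega$. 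The point is that this pairing detects the whole vector $(\mu_E)$: varying $\omega$ over $H^{2j}(X,\Z)$, these numbers determine $f_{E,*}(1)$-coefficients up to the relation defining $\tau_j$, which is precisely the statement that $\frac{\theta^{n-j}}{(n-j)!}$ is (a multiple of) the Hodge generator in that degree.

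Making the inductive step precise is the main obstacle. The subtlety is that $f^*\omega$ for $\omega\in H^{2j}(X,\Z)$ need not exhaust the cohomology of $Y$ relevant to these characteristic numbers, and one must be careful that the characteristic numbers one can form genuinely pin down the residual coefficient vector. The clean way around this is to work strictly inside $X$: the class $f_*[Y]\frown(-)$, or rather the pushforward of all Chern classes $f_*(P(c_\bullet(Y)))\in H_{2j}(X,\Z)$ for $\deg P=d-j$, are Hodge classes in $X$, hence multiples of $\frac{\theta^{n-j}}{(n-j)!}$ by hypothesis; combined with the fact (Lemma~\ref{characteristic}, applied as above) that in the expansion $[f]=\sum_E a_E[f_E]$ the image of $f_*(P(c_\bullet(Y)))$ in $H_{2j}(X,\Z)$ equals $\sum_{|E|=2j}P(a_E)\,f_{E,*}(1)$ modulo terms coming from strictly larger $E$ that have already been subtracted, one gets that $(P(a_E))_{|E|=2j}$ is proportional to $(\mu_E)_{|E|=2j}$ for every $P$ of degree $d-j$. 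Applying this with $P=c_{d-j}$ and with $P$ ranging over a generating set of Chern classes, and using that $\MU_*$ is detected by its Chern numbers (Lemma~\ref{Chern}, or rather the injectivity of the Hurewicz map from Proposition~\ref{propHaze}), forces the existence of a single $x_{d-j}\in\MU_{2(d-j)}$ with $a_E=x_{d-j}\cdot\mu_E$ for all $|E|=2j$. Feeding these $x_i$ back into \eqref{tau} yields \eqref{classinMU}, completing the induction and the proof.
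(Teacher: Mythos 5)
Your overall strategy is the paper's: expand $[f]=\sum_E a_E[f_E]$ in the $\MU_*$-basis of Lemma~\ref{Kunneth}\ref{kb}, use Lemma~\ref{characteristic} and the projection formula to identify $f_*(P(c_j(Y)))$ with $\sum_{|E|=2j}P(a_E)f_{E,*}(1)$ for $P$ of degree $d-j$, and use algebraicity plus the hypothesis on $\Hdg^{2k}(X,\Z)$ to conclude that these classes are integer multiples of the minimal class. (One simplification you could make: since each $a_E$ is homogeneous of degree $2d-|E|$, Lemma~\ref{characteristic} already makes the strata with $|E|\neq 2j$ contribute $0$ to a characteristic number of homogeneous degree $d-j$, so no downward induction or subtraction of previously determined terms is needed.)

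The one step that does not follow from what you cite is the last one: from ``$(P(a_E))_{|E|=2j}=\lambda_P(\mu_E)_{|E|=2j}$ for every degree-$(d-j)$ Chern number $P$'' you conclude that there is a single $x_{d-j}\in\MU_{2(d-j)}$ with $a_E=\mu_E\,x_{d-j}$. Injectivity of the Hurewicz map only gives this after tensoring with $\Q$: by Lemma~\ref{Chern} the Chern numbers form the image of $\mathrm{Hom}(H_{2i}(\mathbf{MU},\Z),\Z)$ in $\mathrm{Hom}(\MU_{2i},\Z)$, which is a \emph{proper} finite-index subgroup precisely because $H$ is not surjective (Proposition~\ref{propHaze}(c)). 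For instance every degree-$1$ Chern number takes even values on $\MU_{2}=\Z\cdot[\P^1]$ (as $c_1[\P^1]=2$), so ``all Chern numbers of $a$ are divisible by $2$'' does not imply $a\in 2\MU_{2}$; your proportionality therefore only yields $x_{d-j}\in\MU_{2(d-j)}\otimes\Q$ a priori, whereas the integrality of the $x_i$ is exactly what the later congruence arguments (Proposition~\ref{even}) consume. The gap is easy to fill: the hypothesis forces $\frac{\theta^{n-j}}{(n-j)!}$ to be primitive (the group of Hodge classes is saturated in integral cohomology), so $\gcd_E\mu_E=1$; writing $1=\sum_E n_E\mu_E$ and setting $x_{d-j}=\sum_E n_E a_E$, every Chern number vanishes on $a_E-\mu_E x_{d-j}$, whence $a_E=\mu_E x_{d-j}$ by rational injectivity and torsion-freeness of $\MU_{2(d-j)}$. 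The paper sidesteps the issue by first fixing a $\Z$-basis $(y_{i,r})$ of $\MU_{2i}$ and choosing for each $r$ a Chern number nonvanishing only on $y_{i,r}$, so that each integer coordinate of the coefficient vector is treated separately and the generation hypothesis produces an integral ratio $\xi_{i,r}$ directly.
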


\begin{proof}
Let $R_i$ be the rank of the free $\Z$-module $\MU_{2i}$, and let $(y_{i,r})_{1\leq r\leq R_i}$ be a $\Z$\nobreakdash-basis of it. Since the $\MU_*$-module $\MU_*(X)$ is free with basis $([f_E])_{E\subset\{1,\dots,N\}}$ by Lemma~\ref{Kunneth}\ref{kb}, there exist unique integers $\nu_{i,r,E}$ such that
\begin{equation}
\label{triplesum}
[f]=\sum_{i=0}^d\sum_{r=1}^{R_i}\left( y_{i,r}\cdot\sum_{|E|=2d-2i}\nu_{i,r,E} [f_E]\right)\in\MU_{2d}(X).
\end{equation}

Fix $0\leq i\leq d$ and $1\leq r\leq R_i$. As $\MU_{2i}\isoto\pi_{2i}(\mathbf{MU})\xrightarrow{H} H_{2i}(\mathbf{MU},\Z)$ is an inclusion of free $\Z$-modules of the same rank $R_i$ by Proposition~\ref{propHaze}, it follows from Lemma~\ref{Chern} that there exists a degree $i$ element $P\in \Z[c_j]_{j\geq 1}$ such that $P(y_{i,s})$ is nonzero if and only if $s=r$. In view of Lemma~\ref{characteristic} and   the projection formula, the characteristic number of~\eqref{triplesum} associated with~$P$ and $\omega\in H^{2d-2i}(X,\Z)$ reads
\begin{equation*}
\deg_{X}\left(f_*(P(c_j(Y)))\cdot \omega\right)
=P(y_{i,r})\deg_{X}\left(\sum_{|E|=2d-2i}\nu_{i,r,E}\cdot f_{E,*}(1)\cdot\omega\right).
\end{equation*}
As $\omega$ is arbitrary, Poincar\'e duality on $X$ implies   
\begin{equation}
\label{Hodge}
f_*(P(c_j(Y)))=P(y_{i,r})\sum_{|E|=2d-2i}\nu_{i,r,E}f_{E,*}(1)\in H^{2n-2d+2i}(X,\Z).
\end{equation}
The left  side of~\eqref{Hodge} is algebraic, hence is a Hodge class. As a consequence, so is the right  side. Since $P(y_{i,r})\neq 0$, we deduce from our hypothesis that the class $\sum_{|E|=2d-2i}\nu_{i,r,E}f_{E,*}(1)$ is an integral multiple of $\frac{\theta^{n-d+i}}{(n-d+i)!}$. Equation~\eqref{mu} now implies   $\sum_{|E|=2d-2i}\nu_{i,r,E} [f_E]=\xi_{i,r}\tau_{d-i}$ for some~\mbox{$\xi_{i,r}\in\Z$.}

It finally follows from~\eqref{triplesum} that~\eqref{classinMU} holds with $x_i=\sum_{r=1}^{R_i}\xi_{i,r}y_{i,r}$.
\end{proof}

\begin{prop}
\label{Chernnumber}
Keep the hypotheses and notation of Proposition~\ref{MUppav}. 
Let $P\in\Z[c_j]_{j\geq 1}$ be of degree $l$ for some $0\leq l\leq d$. Then,
$$\deg_Y\left(P(c_j(Y))\cdot f^*\left(\frac{\theta^{d-l}}{(d-l)!}\right)\right)=\binom{n}{d-l}P(x_l).$$
\end{prop}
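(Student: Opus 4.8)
The plan is to compute the characteristic number of $[f]\in\MU_{2d}(X)$ associated with $P$ and $\omega=\frac{\theta^{d-l}}{(d-l)!}\in H^{2(d-l)}(X,\Z)$ in two ways. On the one hand, by the very definition of characteristic number and the projection formula, this number equals $\deg_Y\big(P(c_j(Y))\cdot f^*(\tfrac{\theta^{d-l}}{(d-l)!})\big)$, which is the left-hand side of the claimed identity. On the other hand, we substitute the decomposition $[f]=\sum_{i=0}^d x_i\cdot\tau_{d-i}$ from Proposition~\ref{MUppav} and evaluate term by term using Lemma~\ref{characteristic}.

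Writing each $\tau_{d-i}=\sum_{|E|=2(d-i)}\mu_E[f_E]$ as in~\eqref{tau}, the summand $x_i\cdot\tau_{d-i}$ contributes $\sum_{|E|=2(d-i)}\mu_E\,(x_i\cdot[f_E])$. By Lemma~\ref{characteristic}, the characteristic number of $x_i\cdot[f_E]$ associated with $P$ (of degree $l$) and $\omega$ vanishes unless $l=i$, and when $l=i$ it equals $P(x_l)\deg_{(\bS^1)^N}\big(f_{E,*}(1)\cdot\omega\big)$. So only the $i=l$ term survives, and the characteristic number of $[f]$ equals
\begin{equation*}
P(x_l)\sum_{|E|=2(d-l)}\mu_E\deg_{X}\Big(f_{E,*}(1)\cdot\frac{\theta^{d-l}}{(d-l)!}\Big)=P(x_l)\deg_X\Big(\frac{\theta^{n-(d-l)}}{(n-(d-l))!}\cdot\frac{\theta^{d-l}}{(d-l)!}\Big),
\end{equation*}
where the last equality uses the defining property~\eqref{mu} of the $\mu_E$, namely $\sum_{|E|=2k}\mu_E f_{E,*}(1)=\frac{\theta^{n-k}}{(n-k)!}$ with $k=d-l$. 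Finally, $\deg_X\big(\frac{\theta^{n-(d-l)}}{(n-(d-l))!}\cdot\frac{\theta^{d-l}}{(d-l)!}\big)=\frac{1}{(n-(d-l))!\,(d-l)!}\deg_X(\theta^{n})=\binom{n}{d-l}$, since $\theta$ is a principal polarization and hence $\deg_X(\theta^n)=n!$. Comparing the two evaluations gives the asserted formula.

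There is no real obstacle here: the statement is essentially a bookkeeping consequence of Proposition~\ref{MUppav} combined with Lemma~\ref{characteristic}. The one point that requires a little care is the degree matching in Lemma~\ref{characteristic}: one must check that for the term $x_i\cdot[f_E]$ with $|E|=2(d-i)$, the hypotheses of the lemma are met (the total degree is $2i+|E|=2d$, and $\omega\in H^{2(d-l)}(X,\Z)=H^{2d-2l}(X,\Z)$ so the lemma applies with its ``$l$'' equal to our $l$ and its ``$i$'' equal to our $i$), so that the vanishing for $i\neq l$ is legitimate. The other mild point is that $\tau_{d-i}$ is only defined when $2(d-i)\ge 0$, i.e.\ $i\le d$, which is exactly the range of summation; and when $l>d$ the statement is vacuous since then $P(c_j(Y))\cdot f^*(\cdots)$ has degree exceeding $2d$, but the hypothesis $0\le l\le d$ rules this out.
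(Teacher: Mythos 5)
Your proof is correct and follows essentially the same route as the paper: expand $[f]$ via Proposition~\ref{MUppav}, apply Lemma~\ref{characteristic} term by term so that only the $i=l$ summand survives, and use the defining property~\eqref{mu} together with $\deg_X(\theta^n)=n!$ to identify the result with $\binom{n}{d-l}P(x_l)$. The degree bookkeeping you check explicitly is indeed the only point requiring care, and you handle it correctly.
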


\begin{proof}
Let us compute the characteristic number of $f$ associated with $P$ and $\frac{\theta^{d-l}}{(d-l)!}$. 
Combining~\eqref{classinMU}, \eqref{tau}, and Lemma~\ref{characteristic} shows that it is
\begin{alignat*}{4}
\deg_Y\left(P(c_j(Y))\cdot f^*\left(\frac{\theta^{d-l}}{(d-l)!}\right)\right)&=P(x_l) \sum_{|E|=2d-2l}\mu_E\deg_{X}\left(f_{E,*}(1)\cdot\frac{\theta^{d-l}}{(d-l)!}\right)\\
&=P(x_l) \deg_{X}\left(\frac{\theta^{n-d+l}}{(n-d+l)!}\cdot\frac{\theta^{d-l}}{(d-l)!}\right).
\end{alignat*}
Since $\deg_{X}(\theta^n)=n!$, the proposition is proven.
\end{proof}

\subsection{Smooth subvarieties of abelian varieties} 
\label{parproof}

The next proposition is an application of a Barth--Lefschetz-type theorem proved by  Sommese \cite{Sommese}.

\begin{prop}
\label{BarthLefschetz}
Let $(X,\theta)$ be a principally polarized complex abelian variety of dimension~$n$ such that $\Hdg^{2k}(X,\Z)$ is generated by $\frac{\theta^{k}}{k!}$ for all $k\geq 0$. Let $f\colon Y\to X$ be the inclusion of a smooth projective subvariety of pure codimension $c$. Assume that $n\geq 4c-2l$ for some $l\geq 1$. Then there exist $a_0,\dots,a_{c-l},a_c\in\Z$ such that
\begin{enumerate}
\item\label{da} $s_i(Y)=a_if^*(\frac{\theta^i}{i!})$ for $i\in\{0,\dots,c-l,c\}${\rm ;}
\item\label{db} $f_*[Y]$ is Poincar\'e-dual to $a_c\frac{\theta^c}{c!}\in H^{2c}(X,\Z)$.
\end{enumerate}
\end{prop}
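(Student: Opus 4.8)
The plan is to exploit the Barth--Lefschetz theorem of Sommese to transfer both cohomological and Chern-class information from $Y$ to $X$. Concretely, Sommese's theorem asserts that for a smooth subvariety $f\colon Y\hookrightarrow X$ of an abelian variety of dimension $n$ and codimension $c$, the Gysin-restriction maps $H^k(X,\Z)\to H^k(Y,\Z)$ are isomorphisms for $k\leq n-2c-1$ and injective for $k=n-2c$ (and dually for $f_*$ in a range of degrees). Under the running hypothesis that $\Hdg^{2k}(X,\Z)=\Z\cdot\frac{\theta^k}{k!}$, I would combine this with the fact that $s_i(Y)$ and $f_*[Y]$ are algebraic, hence Hodge, classes.

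First I would prove \ref{db}. The class $f_*[Y]\in H^{2c}(X,\Z)$ is algebraic, hence Hodge, so by hypothesis $f_*[Y]=a_c\frac{\theta^c}{c!}$ for some $a_c\in\Z$; this needs no Barth--Lefschetz input at all. Next, for \ref{da}, fix $i\in\{0,\dots,c-l\}\cup\{c\}$. The Segre class $s_i(Y)=s_i(c_j(Y))\in H^{2i}(Y,\Z)$ is a polynomial in the Chern classes of $Y$; since the normal bundle sequence gives $c(Y)=f^*c(X)\cdot c(N_{Y/X})^{-1}$ and $c(X)=1$ (the tangent bundle of an abelian variety is trivial), we get $c(Y)=c(N_{Y/X})^{-1}$, so $s_i(Y)$ is a universal polynomial in the Chern classes of the normal bundle $N_{Y/X}$, a rank-$c$ bundle on $Y$. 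The key point is to pull this class back through $f$: I claim $s_i(Y)$ lies in the image of $f^*\colon H^{2i}(X,\Z)\to H^{2i}(Y,\Z)$. When $i\leq c-l$, we have $2i\leq 2c-2l\leq n-2c$ (using $n\geq 4c-2l$), so by Sommese's theorem $f^*\colon H^{2i}(X,\Z)\to H^{2i}(Y,\Z)$ is an isomorphism (for $2i<n-2c$) or injective (for $2i=n-2c$) — I should double-check that in the edge case $2i=n-2c$ one still gets surjectivity, or else strengthen to $i\le c-l$ with the inequality $n\geq 4c-2l$ giving strict inequality; in any case the relevant restriction map has image containing $s_i(Y)$, so $s_i(Y)=f^*\beta_i$ for a unique $\beta_i\in H^{2i}(X,\Z)$. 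The class $s_i(Y)$ is algebraic on $Y$ (being a polynomial in Chern classes of an algebraic bundle), and I would argue that $\beta_i$ is therefore a Hodge class on $X$: indeed $f^*$ is a morphism of Hodge structures and is injective in this range, so $\beta_i$ being sent to a Hodge class forces $\beta_i$ itself to be Hodge. By hypothesis $\beta_i=a_i\frac{\theta^i}{i!}$, giving $s_i(Y)=a_if^*(\frac{\theta^i}{i!})$.

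For the remaining case $i=c$, the class $s_c(Y)\in H^{2c}(Y,\Z)$ need not be in the image of $f^*$ on cohomology, so a dual argument via $f_*$ is needed. Here I would use $\deg_Y(s_c(Y))=s_c(Y)\frown[Y]$ and push forward: $f_*(s_c(Y))\in H^{2n-2c+2c}(X,\Z)=H^{2n}(X,\Z)$ is $\deg_Y(s_c(Y))$ times the point class, but that only records the top Segre number. To get the class-level statement I instead observe that $s_c(Y)=f^*\gamma$ is too strong; rather, what one needs is just that $s_c(Y)$ is a multiple of $f^*(\frac{\theta^c}{c!})$, and since $H^{2c}(Y,\Z)$ in the relevant range — wait, $2c$ may exceed $n-2c$. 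The cleaner route: by \ref{db}, $f_*[Y]=a_c\frac{\theta^c}{c!}$, and the self-intersection formula gives $f^*f_*[Y]=c_c(N_{Y/X})=s_c(Y)\cdot(-1)^{\text{sign}}$ up to a standard sign (more precisely $f^*f_*[Y]=c_{\text{top}}(N_{Y/X})$, and the top Chern and top Segre classes of a rank-$c$ bundle agree up to sign), hence $s_c(Y)=\pm f^*(a_c\frac{\theta^c}{c!})$, which is the desired relation after absorbing the sign into $a_c$ (or noting top Segre $=(-1)^c$ top Chern and adjusting). The main obstacle I anticipate is precisely the bookkeeping in the edge cases of Sommese's theorem (whether $n\geq 4c-2l$ yields surjectivity or only injectivity of $f^*$ in degree $n-2c$, and how the hypothesis $l\geq 1$ enters) and the sign/normalization in the self-intersection formula linking $s_c(Y)$ to $f^*f_*[Y]$; everything else is a formal consequence of the Hodge-theoretic hypothesis together with the algebraicity of Chern and cycle classes.
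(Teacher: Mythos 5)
Your overall strategy --- deducing (b) from the Hodge hypothesis alone, (a) for $i\le c-l$ from a Barth--Lefschetz theorem, and the $i=c$ case from the self-intersection formula --- is exactly the paper's. Two points, however, are genuine gaps. First, Sommese's Barth--Lefschetz theorem for abelian varieties does not hold in the clean form you state for an \emph{arbitrary} abelian variety: for $Y=A\times\{0\}\subset A\times E$ with $E$ an elliptic curve and $\dim A$ large, the restriction $H^1(X,\Z)\to H^1(Y,\Z)$ is not injective even though $1\le n-2c-1$. The sharp statement carries a correction term coming from quotient abelian varieties, and reduces to the bound you want only when $X$ is simple. The paper therefore first deduces simplicity of $X$ from the hypothesis that $\Hdg^{2}(X,\Z)$ is generated by $\theta$ (a nontrivial quotient would produce a nonample divisor class), and only then invokes Sommese; you must include this step. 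Second, your range (isomorphism for $k\le n-2c-1$, injectivity only at $k=n-2c$) is off by one and, as you yourself flag, would break precisely in the edge case $n=4c-2l$, $i=c-l$, where $2i=n-2c$ and surjectivity is needed. The correct range, obtained from $\pi_j(X,Y,y)=0$ for $j\le n-2c+1$ via the relative Hurewicz theorem, universal coefficients, and the long exact sequence of the pair, is that $H^j(X,\Z)\to H^j(Y,\Z)$ is an isomorphism for all $j\le n-2c$, which covers that case.

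On the $i=c$ step: there is no sign to ``absorb.'' Since $T_X$ is trivial, $s(Y)=c(T_Y)^{-1}=c(N_{Y/X})$, so $s_c(Y)=c_c(N_{Y/X})=f^*f_*[Y]=a_cf^*\bigl(\frac{\theta^c}{c!}\bigr)$ on the nose, with the \emph{same} $a_c$ as in (b). This matters: the proposition asserts that a single integer $a_c$ works in both (a) and (b), so a sign discrepancy could not be eliminated by redefining $a_c$ without breaking one of the two statements. Your general heuristic ``top Segre equals top Chern up to sign'' is also not correct for the Segre classes as defined here (e.g.\ $s_2=c_1^2-c_2$); the identity you need is exactly $s(Y)=c(N_{Y/X})$, which follows from the triviality of $T_X$ that you already noted.
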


\begin{proof}
Since the subvariety $Y\subset X$ is algebraic, the homology class $f_*[Y]$ is Poincar\'e-dual to a Hodge class, which is necessarily of the form $a_c\frac{\theta^c}{c!}$ by hypothesis.\ This proves~\ref{db}.\ By the self-intersection formula \cite[Corollary~6.3]{Fulton}, the top Chern class of the normal bundle $N_{Y/X}$ is $a_cf^*(\frac{\theta^c}{c!})$.\ Since the tangent bundle $T_X$ is trivial, one has $c(N_{Y/X})=c(Y)^{-1}=s(Y)$.\ This shows~\ref{da} for $i=c$.

If the abelian variety $X$ were not simple, pulling back an ample divisor from a nontrivial quotient would produce a nonample divisor on $X$, contradicting the fact that $\Hdg^{2}(X,\Z)$ is generated by~$\theta$. We deduce that $X$ is simple. One may thus apply \cite[Corollary~3.5 and~(3.6.1)]{Sommese} with $B=Y$ to obtain $\pi_j(X,Y,y)=0$ for $j\leq n-2c+1$ and all $y\in Y$. It follows from the version \cite[Theorem~4.37]{Hatcher} of the Hurewicz theorem, from the universal coefficient theorem, and from the long exact sequence of relative cohomology of the pair~$(X,Y)$ that the restriction map $H^j(X,\Z)\to H^j(Y,\Z)$ is an isomorphism for $j\leq n-2c$. For $0\leq i\leq c-l$, one may apply this fact with $j=2i$ because $n\geq 4c-2l$. This shows that the class $s_i(Y)\in H^{2i}(Y,\Z)$, which is Hodge because it is algebraic,  is the restriction to $Y$ of a class in $\Hdg^{2i}(X,\Z)$. The latter is necessarily of the form $a_i\frac{\theta^i}{i!}$ by hypothesis. The proof is now complete.
\end{proof}

\begin{prop}
\label{even}
Keep the hypotheses and notation of Proposition~\ref{BarthLefschetz}, assume that $l=1$, and suppose in addition that $\alpha(c+\alpha(c))>\alpha(c)$. Then $a_c$ is even.
\end{prop}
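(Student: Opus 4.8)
The plan is to assume that $a_c$ is odd (so in particular $a_c\neq 0$) and to reach a contradiction by confronting the standing hypothesis $\alpha(c+\alpha(c))>\alpha(c)$ with Proposition~\ref{cong}; throughout, $x_i\in\MU_{2i}$ denotes the class appearing in~\eqref{classinMU}.

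\emph{First, express the Segre numbers of $x_c$.} Set $\omega_i:=f^*\big(\tfrac{\theta^i}{i!}\big)\in H^{2i}(Y,\Z)$. Since $l=1$, Proposition~\ref{BarthLefschetz}\ref{da} gives $s_i(Y)=a_i\omega_i$ for all $i\in\{0,\dots,c\}$, and $\tfrac{\theta^i}{i!}\tfrac{\theta^j}{j!}=\binom{i+j}{i}\tfrac{\theta^{i+j}}{(i+j)!}$ yields $\omega_i\omega_j=\binom{i+j}{i}\omega_{i+j}$. Hence, for every partition $\lambda=(\lambda_1,\dots,\lambda_r)$ of $c$, the degree-$c$ Segre monomial $s_\lambda:=\prod_k s_{\lambda_k}\in\Z[c_j]_{j\ge1}$ satisfies $s_\lambda(Y)=\big(\prod_k a_{\lambda_k}\big)\tfrac{c!}{\lambda_1!\cdots\lambda_r!}\,\omega_c$. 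Feeding $P=s_\lambda$ into Proposition~\ref{Chernnumber} (legitimate since $0\le c\le d$, as $n\ge 4c-2$), and using Proposition~\ref{BarthLefschetz}\ref{db} together with $\deg_X(\theta^n)=n!$, a routine computation yields
\[
 s_\lambda(x_c)=\binom{2c}{c}\,\frac{c!}{\lambda_1!\cdots\lambda_r!}\,\Big(\prod_k a_{\lambda_k}\Big)\,a_c .
\]

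\emph{Next, deduce a uniform divisibility.} The Segre monomials $\{s_\lambda:\lambda\vdash c\}$ form a $\Z$-basis of the degree-$c$ homogeneous part of $\Z[c_j]_{j\ge1}$: indeed, \eqref{Segredef} gives inductively $s_m=-c_m+\big(\text{a }\Z\text{-combination of products }c_{j_1}\cdots c_{j_k}\text{ with }k\ge2\big)$, so expanding $s_\lambda$ in the monomial basis $\{c_\lambda:=\prod_k c_{\lambda_k}\}$ produces $\pm c_\lambda$ plus terms indexed by partitions of $c$ with strictly more parts, whence the change of basis is unitriangular up to signs once partitions are ordered by number of parts. Combined with the first step, this shows that $\binom{2c}{c}a_c$ divides $P(x_c)$ for \emph{every} homogeneous $P\in\Z[c_j]_{j\ge1}$ of degree $c$.

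\emph{Finally, conclude.} Apply Proposition~\ref{cong} with $i=c$, $h=1$ and $e=\alpha(c)$: its assertion~\ref{bb} is precisely $\alpha(c+\alpha(c))>\alpha(c)$, so there is a homogeneous degree-$c$ element $Q\in\Z[c_j]_{j\ge1}$ such that $s_c+2Q\colon\MU_{2c}\to\Z$ takes only values divisible by $2^{\alpha(c)+1}$; evaluating at $x_c$ gives $s_c(x_c)+2Q(x_c)\equiv 0\pmod{2^{\alpha(c)+1}}$. By the second step $\binom{2c}{c}a_c\mid Q(x_c)$, while Legendre's formula $v_2(m!)=m-\alpha(m)$ and $\alpha(2c)=\alpha(c)$ give $v_2\big(\binom{2c}{c}\big)=\alpha(c)$; since $a_c$ is odd, $v_2\big(2Q(x_c)\big)\ge\alpha(c)+1$, whence $s_c(x_c)\equiv 0\pmod{2^{\alpha(c)+1}}$. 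But the first step with $\lambda=(c)$ reads $s_c(x_c)=\binom{2c}{c}a_c^2$, of $2$-adic valuation exactly $\alpha(c)$ since $a_c$ is odd---a contradiction, so $a_c$ is even. I expect the main obstacle to be the $\Z$-basis statement of the second step: it is \emph{integrality}, not just the rational span of the $s_\lambda$, that makes the divisibility $\binom{2c}{c}a_c\mid Q(x_c)$ available for the otherwise uncontrolled polynomial $Q$ furnished by Proposition~\ref{cong}; once that, together with the multinomial bookkeeping of the first step, is in place, the rest is a comparison of $2$-adic valuations.
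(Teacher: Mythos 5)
Your proof is correct and follows essentially the same route as the paper: both apply Proposition~\ref{cong} with $i=c$, $e=\alpha(c)$, $h=1$, feed the resulting Chern number through Proposition~\ref{Chernnumber}, and conclude by comparing $2$-adic valuations using $v_2\bigl(\binom{2c}{c}\bigr)=\alpha(c)$. The only (cosmetic) difference is how the divisibility $\binom{2c}{c}a_c\mid Q(x_c)$ is obtained: you expand $Q$ in the Segre-monomial $\Z$-basis and evaluate each $s_\lambda(x_c)$, whereas the paper evaluates $Q(c_j(Y))$ directly on $Y$ as a multiple of $f^*(\frac{\theta^c}{c!})$ via Proposition~\ref{BarthLefschetz}\ref{da} --- two bookkeeping devices for the same fact.
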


\begin{proof}
Let $Q\in\Z[c_j]_{j\geq 1}$ be the degree $c$ homogeneous polynomial obtained by applying Proposition~\ref{cong} with $i=c$, $e=\alpha(c)$, and $h=1$. Applying Proposition~\ref{Chernnumber} with $l=c$ and $P=s_c+2Q$ yields the identity
\begin{equation}
\label{identity}
\deg_Y\left(P(c_j(Y))\cdot f^*\left(\frac{\theta^{n-2c}}{(n-2c)!}\right)\right)=\binom{n}{n-2c}P(x_c).
\end{equation}
Using that Chern classes may be expressed as polynomials with integral coefficients in Segre classes by~\eqref{Segredef}, it follows from Proposition~\ref{BarthLefschetz}\ref{da} that $Q(c_j(Y))\in H^{2c}(Y,\Z)$ is an integral multiple of $f^*(\frac{\theta^{c}}{c!})$, say $Q(c_j(Y))=b f^*(\frac{\theta^{c}}{c!})$ for some $b\in\Z$. Applying Proposition~\ref{BarthLefschetz}\ref{da} again, we get $P(c_j(Y))=(a_c+2b)f^*(\frac{\theta^{c}}{c!})$. Rewriting the left side of~\eqref{identity} using the projection formula and Proposition~\ref{BarthLefschetz}\ref{db}, we obtain
\begin{equation*}
\label{identity2}
\deg_X\left(a_c\frac{\theta^{c}}{c!}\cdot (a_c+2b)\frac{\theta^{c}}{c!}\cdot \frac{\theta^{n-2c}}{(n-2c)!}\right)=\binom{n}{n-2c}P(x_c).
\end{equation*}
Using   $\deg_X(\theta^n)=n!$, we finally get
\begin{equation}
\label{identity3}
P(x_c)=\binom{2c}{c}a_c(a_c+2b).
\end{equation}
Our choice of $Q$ implies that the left side of (\ref{identity3}) is divisible by $2^{\alpha(c)+1}$. The formula for the $2$\nobreakdash-adic valuation of the factorial given in \cite[Lemma, Section~5.3.1, p.~241]{Robert} implies that the $2$-adic valuation of $\binom{2c}{c}$ is equal to $\alpha(c)$. We deduce that $a_c(a_c+2b)$ is even, hence that $a_c$ is even.
\end{proof}

\begin{thm}
\label{thsmooth}
Let $(X,\theta)$ be a very general complex Jacobian of dimension $n$. Let $c\geq 0$ be such that $\alpha(c+\alpha(c))>\alpha(c)$ and $n\geq 4c-2$. Then the classes $\lambda\frac{\theta^c}{c!}$ with $\lambda$ odd are algebraic but are not $\Z$-linear combinations of cycle classes of smooth subvarieties of $X$.
\end{thm}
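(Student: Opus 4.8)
The plan is to combine the parity statement of Proposition~\ref{even} with the hypotheses on the Jacobian $(X,\theta)$ and then argue by contradiction. First I would record that for a very general Jacobian of dimension $n$, the curve $C$ has trivial endomorphism ring, so $(X,\theta)$ is a simple principally polarized abelian variety whose Hodge ring is generated by $\theta$; concretely, $\Hdg^{2k}(X,\Z)$ is generated by $\frac{\theta^k}{k!}$ for every $k$. (This is the input needed to apply Propositions~\ref{MUppav}, \ref{BarthLefschetz}, and~\ref{even}.) In particular the minimal class $\frac{\theta^c}{c!}$, being the class of $W_{n-c}(C)$, is algebraic, hence so is every $\lambda\frac{\theta^c}{c!}$; this gives the easy half of the statement.

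Next I would set up the contradiction. Suppose $\lambda\frac{\theta^c}{c!}$ is a $\Z$-linear combination $\sum_k m_k[Z_k]$ of cycle classes of smooth subvarieties $Z_k\subset X$, with $\lambda$ odd. Since cohomology is pure, only those $Z_k$ of pure codimension $c$ contribute, so we may assume each $Z_k$ has pure codimension $c$; write $f_k\colon Z_k\hookrightarrow X$ for the inclusion. For each $k$, apply Proposition~\ref{BarthLefschetz} with $l=1$: the hypothesis $n\geq 4c-2 = 4c-2l$ is exactly what is needed, and we obtain an integer $a_c^{(k)}$ with $f_{k,*}[Z_k]$ Poincaré-dual to $a_c^{(k)}\frac{\theta^c}{c!}$. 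By Proposition~\ref{even} — whose remaining hypothesis $\alpha(c+\alpha(c))>\alpha(c)$ is assumed in the theorem — each $a_c^{(k)}$ is even. Summing, $\lambda\frac{\theta^c}{c!} = \bigl(\sum_k m_k a_c^{(k)}\bigr)\frac{\theta^c}{c!}$, so $\lambda = \sum_k m_k a_c^{(k)}$ is even, contradicting the assumption that $\lambda$ is odd.

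The essentially only subtlety I anticipate is justifying the reduction to subvarieties of pure codimension $c$ and checking that "$\Z$-linear combination of cycle classes of smooth subvarieties" can be taken componentwise in codimension: this is routine since $H^{2c}(X,\Z)$ sits inside $H^*(X,\Z)$ and a reducible smooth subvariety is a disjoint union of its components. A minor point is to confirm that a very general Jacobian really does have $\mathrm{End}(C)=\Z$ and hence Hodge ring generated by $\theta$; this is classical (the very general curve has no extra endomorphisms, and for such an abelian variety Hodge classes are known to be powers of $\theta$ — e.g. by Mattuck/Tankeev-type arguments or by the fact that a simple abelian variety with $\mathrm{End}=\Z$ and odd-dimensional, or more robustly via the structure of its Hodge group). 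Everything else is a direct concatenation of the propositions already proved, so there is no serious obstacle — the heavy lifting was done in Propositions~\ref{cong}, \ref{MUppav}, \ref{BarthLefschetz}, and~\ref{even}.
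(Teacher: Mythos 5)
Your proposal is correct and follows essentially the same route as the paper: algebraicity via Poincaré's formula, the generation of $\Hdg^{2k}(X,\Z)$ by $\frac{\theta^k}{k!}$ for a very general Jacobian, and then Proposition~\ref{BarthLefschetz}\ref{db} combined with Proposition~\ref{even} to force every smooth codimension-$c$ subvariety to have even class, hence excluding odd $\lambda$. The extra details you supply (reduction to pure codimension $c$, summing the even coefficients) are routine and consistent with what the paper leaves implicit.
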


\begin{proof}
The integral class $\frac{\theta^c}{c!}$ is algebraic by \cite[Poincar\'e's formula  11.2.1]{BL}.

The hypothesis that $\Hdg^{2k}(X,\Z)$ is generated by $\frac{\theta^{k}}{k!}$ for all $k\geq 0$ is satisfied by \cite[Theorem~17.5.1]{BL} and because the integral class $\frac{\theta^{k}}{k!}$ is primitive. One may thus combine Propositions~\ref{BarthLefschetz}\ref{db} and~\ref{even} to show that the cycle classes of smooth codimension $c$ subvarieties $Y\subset X$ are even multiples of $\frac{\theta^c}{c!}$. This concludes the proof.
\end{proof}

\begin{rems}\leavevmode
\begin{enumerate}[(i),wide]
\item
In Theorem~\ref{thsmooth}, the hypothesis that $(X,\theta)$ is very general is only used to ensure that $\Hdg^{2k}(X,\Z)$ is generated by $\frac{\theta^{k}}{k!}$ for $k\geq 0$. 
As there exist Jacobians over $\overline{\mathbb{Q}}$ whose Mumford--Tate group is the full symplectic group (see \cite[Th\'eor\`eme 5.2 3) and Remarque~(vii) below it]{Andre} which apply as all Hodge classes on abelian varieties are absolute Hodge by \cite[Main Theorem 2.11]{Deligne}), one may find such an 
 $(X,\theta)$ that is defined over $\overline{\mathbb{Q}}$.

\item The proof of Theorem~\ref{thsmooth} actually shows that the class of any smooth subvariety of codimension~$c$ of~$X$ is divisible by $2$ in the group $H^{2c}(X,\Z)_{\alg}$.
\end{enumerate}
\end{rems}

\section{Codimension 4 cycles}
\label{secCod4}

Theorem~\ref{thsmooth} is not optimal in several respects. When $c$ is fixed, it is sometimes possible to give stronger restrictions on the cycle classes of smooth subvarieties of codimension $c$ of~$X$ or results for lower values of the dimension $n$ of $X$. 

To obtain such improvements, one may work with complex topological $K$-theory instead of complex cobordism   and replace the divisibility result for Chern numbers given in Proposition~\ref{cong} with an application of the Grothendieck--Riemann--Roch theorem and an integrality property of the Chern character (\textit{cf.}~Lemma~\ref{Chernint}). 
This works well when $c$ is low but becomes intractable for high values of $c$. We illustrate the method when~$c=4$.

We start with a lemma. Let $X$ be a topological space, and let
$K^*(X)=K^0(X)\oplus K^1(X)$ be its $\Z/2$-graded complex topological $K$-theory   defined in \cite[Section~1.9]{AHKtheory}. We consider the Chern character ${\ch:K^*(X)\to H^*(X,\Q)}$ as in \cite[Section~1.10]{AHKtheory}.

\begin{lem}
\label{Chernint}
For $N\geq 1$, the Chern character $\ch:K^*((\bS^1)^N)\to H^*((\bS^1)^N,\Q)$ is an isomorphism onto $H^*((\bS^1)^N,\Z)$.
\end{lem}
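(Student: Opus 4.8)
The plan is to reduce the statement to the case $N=1$ via the Künneth formula, using that the Chern character is a multiplicative natural transformation. For $N=1$, the torus $\bS^1$ is homotopy equivalent to a circle, so $K^0(\bS^1)=\Z$ (generated by the trivial line bundle) and $K^1(\bS^1)=\Z$ (generated, say, by the class coming from the identity $\bS^1\to \mathrm{U}(1)$); meanwhile $H^*(\bS^1,\Z)=\Z\oplus\Z$ concentrated in degrees $0$ and $1$. On $H^0$ the Chern character is the rank map $K^0(\bS^1)\to H^0(\bS^1,\Z)=\Z$, which is an isomorphism. On $H^1$, since $H^1(\bS^1,\Q)$ receives no contribution from $K^0$ (the Chern character on $K^0$ lands in even degrees) the relevant map $K^1(\bS^1)\to H^1(\bS^1,\Q)$ must be checked to be an isomorphism onto $H^1(\bS^1,\Z)$; this is the normalization of the generator of $K^1(\bS^1)$ and follows from the suspension isomorphism together with $\ch$ of the Bott generator of $\widetilde K^0(\bS^2)$ being the standard generator of $H^2(\bS^2,\Z)$. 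So for $N=1$ the claim holds.

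Next I would bootstrap to general $N$ using compatibility with exterior products. Because $\bS^1$ has torsion-free $K$-theory and torsion-free cohomology concentrated in degrees $0$ and $1$, the Künneth theorem in complex $K$-theory (\cite[Theorem~13.75~i)]{Switzer}, already invoked in the proof of Lemma~\ref{Kunneth}) gives $K^*((\bS^1)^N)\cong \bigotimes_{j=1}^N K^*(\bS^1)$, and the ordinary Künneth theorem gives $H^*((\bS^1)^N,\Z)\cong \bigotimes_{j=1}^N H^*(\bS^1,\Z)$. Under these identifications the Chern character is the $N$-fold tensor product of the maps $\ch\colon K^*(\bS^1)\to H^*(\bS^1,\Z)$, by multiplicativity of $\ch$ on an exterior product of classes. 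Since each factor is an isomorphism of finitely generated free abelian groups and the tensor product of isomorphisms of free modules over $\Z$ is again an isomorphism, $\ch\colon K^*((\bS^1)^N)\to H^*((\bS^1)^N,\Q)$ is injective with image exactly $\bigotimes_j H^*(\bS^1,\Z)=H^*((\bS^1)^N,\Z)$.

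The step I expect to require the most care is bookkeeping the grading conventions: $K^*$ is only $\Z/2$-graded while $H^*$ is $\Z$-graded, and the Chern character sends $K^0$ to even-degree and $K^1$ to odd-degree cohomology. One must check that under the Künneth identifications these parities match up — i.e.\ that the map is grading-compatible in the appropriate sense — before concluding that an isomorphism of $\Z/2$-graded groups induces the stated isomorphism onto the integral lattice. A clean way to organize this is to observe that the image of $\ch$ on $K^*(\bS^1)$ is the subring of $H^*(\bS^1,\Z)$ generated by the rank class and the degree-$1$ generator, that $\ch$ is a ring homomorphism, and that the integral cohomology ring of $(\bS^1)^N$ is precisely the exterior algebra on $N$ degree-one generators — so it suffices to produce, for each of the $N$ coordinate circles, a $K$-theory class whose Chern character is the corresponding exterior-algebra generator, and to note these generate. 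Everything else is multiplicativity and the $N=1$ computation. No serious obstacle beyond this organizational point arises; the content is entirely contained in the $N=1$ case and the two Künneth theorems.
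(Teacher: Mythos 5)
Your proposal is correct and follows essentially the same route as the paper: the $N=1$ case via the rank map on $K^0$ and the suspension/Bott-periodicity normalization on $K^1$, then the K\"unneth formulas in $K$-theory and cohomology together with multiplicativity of $\ch$. The only cosmetic difference is the reference used for the $K$-theoretic K\"unneth theorem (the paper cites Atiyah's lemma rather than Switzer's general statement).
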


\begin{proof}
  First, suppose that $N=1$. The morphism $\ch:K^0(\bS^1)\to H^0(\bS^1,\Q)=\Q$ has image $\Z$ because it associates with each vector bundle its rank, and it is injective because $K^0(\bS^1)\simeq\Z$. That $\ch:K^1(\bS^1)\to H^1(\bS^1,\Q)$ is an isomorphism onto $H^1(\bS^1,\Z)$ follows from the definition of this morphism using suspension and Bott periodicity (see \cite[Section~1.10]{AHKtheory}) and from the fact the Chern character sends the Bott element which is a generator of $\widetilde{K}^0(\bS^2)$ to a generator of $H^2(\bS^2,\Z)$ (see \cite[Section~1.10, p.~206]{AHKtheory}).
 
It now follows from the K\"unneth formula in cohomology and in complex topological $K$\nobreakdash-theory (for which see \cite[Lemma 1]{AKunneth}) and from the multiplicativity of the Chern character that $\ch:K^*((\bS^1)^N)\to H^*((\bS^1)^N,\Q)$ is an isomorphism onto $H^*((\bS^1)^N,\Z)$.
\end{proof}

\begin{prop}
\label{codim4}
Let $(X,\theta)$ be a principally polarized complex abelian variety of dimension~$n$ such that $\Hdg^{2k}(X,\Z)$ is generated by $\frac{\theta^{k}}{k!}$ for all $k\geq 0$. Let $Y\subset X$ be a smooth projective subvariety   of pure codimension $4$. 
\begin{enumerate}
\item\label{ea} If $n\geq 12$,   the cohomology class of $Y$ is an integral multiple of $2\frac{\theta^4}{4!}$.
\item\label{eb} If $n\geq 14$,   the cohomology class of $Y$ is an integral multiple of $4\frac{\theta^4}{4!}$.
\end{enumerate}
\end{prop}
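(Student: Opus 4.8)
The strategy mirrors the proof of Proposition~\ref{even}, but uses complex topological $K$-theory in place of complex cobordism. The main technical input will be the analogue of Proposition~\ref{MUppav} for $K$-theory: namely, that the $K$-theory class of a morphism $f\colon Y\to X$ with $Y$ smooth projective of dimension $d$ decomposes, under our Hodge hypothesis, into pieces controlled by the classes $\frac{\theta^{k}}{k!}$. Concretely, identifying $X\simeq(\bS^1)^N$ as in Section~\ref{parMUab}, one uses Lemma~\ref{Chernint} together with the K\"unneth formula in $K$-theory to see that $K^*(X)$ is free of finite rank and that $\ch$ identifies it with $H^*(X,\Z)$. Given $f\colon Y\to X$, the Gysin pushforward $f_!\cO_Y\in K^0(X)$ satisfies, by Grothendieck--Riemann--Roch, $\ch(f_!\cO_Y)=f_*(\td(T_Y))$; since the left side lies in $H^*(X,\Z)$ and each graded piece is algebraic hence Hodge, our hypothesis forces each graded piece of $f_*(\td(T_Y))$ to be an integral multiple of the appropriate $\frac{\theta^{k}}{k!}$.

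First I would set up the Barth--Lefschetz input exactly as in Proposition~\ref{BarthLefschetz}: $X$ is simple, so Sommese's theorem gives $\pi_j(X,Y,y)=0$ for $j\le n-2c+1=n-7$, whence the restriction $H^j(X,\Z)\to H^j(Y,\Z)$ is an isomorphism for $j\le n-8$. For $n\ge 12$ this covers $j\le 4$ and for $n\ge 14$ it covers $j\le 6$. Combined with the Hodge hypothesis, this means: in part~\ref{ea}, the Chern classes $c_1(T_Y),c_2(T_Y)$ are pulled back from integral multiples of $\frac{\theta}{1},\frac{\theta^2}{2!}$, and the cohomology class $f_*[Y]$ is $a_4\frac{\theta^4}{4!}$; in part~\ref{eb}, additionally $c_3(T_Y)$ is pulled back. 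Write $s_i(Y)$ (Segre classes) in the same way.

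Next I would compute the degree-$8$ part of the GRR identity. Expanding $\ch(f_!\cO_Y)=f_*(\td(T_Y))$ and extracting the component in $H^{2c}(X,\Z)=H^8(X,\Z)$, one gets an expression of the form $\bigl(\text{universal polynomial in }c_1,\dots,c_4\text{ of }T_Y\bigr)$ pushed forward, which by the projection formula and the Barth--Lefschetz substitutions becomes $\binom{n}{4}$ times a polynomial in $a_1,a_2,a_3,a_4$, set equal to an \emph{integer} (because $\ch(f_!\cO_Y)\in H^*(X,\Z)$). The $2$-adic valuation of $\binom{n}{4}$ is controlled, and the key point is that the top Chern class $c_4(T_Y)$ — equivalently the Segre class $s_4(Y)$, hence $a_4\frac{\theta^4}{4!}$ by the self-intersection formula — appears with a coefficient whose $2$-adic valuation is small enough that integrality of the left side forces $2\mid a_4$ (for part~\ref{ea}) or $4\mid a_4$ (for part~\ref{eb}). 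This is where I would invoke Lemma~\ref{Segre}\ref{ac} (the Segre number $s_4$ is always even, and divisible by $4$ on decomposables) to improve $2\mid a_4$ to $4\mid a_4$ once the weaker bound has been extracted; the extra hypothesis $n\ge 14$ in \ref{eb} is precisely what lets one substitute for $c_3(T_Y)$ and thereby isolate the relevant congruence. One should also use, in part~\ref{eb}, a second characteristic number (associated with another element of $K^*(X)$, e.g.\ $f_!$ of a suitable line bundle on $Y$, or the $\td$-twisted pushforward of a power of the hyperplane class) to squeeze out the mod-$4$ information; extracting \emph{two} independent integral constraints is what distinguishes the proof of \ref{eb} from that of \ref{ea}.

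**Main obstacle.** The genuinely delicate part is the bookkeeping of $2$-adic valuations: one must write down the degree-$8$ component of the Todd genus (and of at least one more twisted characteristic class for \ref{eb}), substitute the Barth--Lefschetz relations $c_i(T_Y)\leftrightarrow a_i$, clear the denominators coming from $\td$ and from $\frac{\theta^k}{k!}$, and verify that after all cancellations the coefficient of $a_4$ has exactly the $2$-adic valuation needed — no more, no less — so that integrality of the GRR output yields the stated divisibility and not a weaker one. The GRR pushforward also lands in $H^*(X,\Z)$ rather than $H^*(X,\Q)$, which must be justified via Lemma~\ref{Chernint} (the point of that lemma), and one must check that the intermediate graded pieces being Hodge classes really are integral multiples of $\frac{\theta^k}{k!}$ and not merely rational ones — here the primitivity of $\frac{\theta^k}{k!}$ and the explicit structure of $\Hdg^*(X,\Z)$ for a very general Jacobian (as recalled in the proof of Theorem~\ref{thsmooth}) do the work. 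I do not expect any conceptual difficulty beyond Proposition~\ref{even}; the arithmetic is simply more involved because codimension $4$ brings in $c_3$ and $c_4$ and the Todd class has genuine denominators.
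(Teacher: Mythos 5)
Your overall strategy is the paper's: Barth--Lefschetz via Sommese to express the low-degree characteristic classes of $Y$ as multiples of $f^*(\frac{\theta^i}{i!})$, Grothendieck--Riemann--Roch for $f_![\cO_Y]$, integrality of the Chern character on $X\simeq(\bS^1)^{2n}$ via Lemma~\ref{Chernint}, and then $2$-adic bookkeeping. However, as written the plan has a genuine gap: the entire content of the proposition is the $2$-adic computation you defer, and the component you propose to extract is not the one that carries the information, nor is it sufficient on its own. The graded piece of $\ch(f_![\cO_Y])$ in $H^{8}(X,\Z)$ is just $a_4\frac{\theta^4}{4!}$, which is integral for every $a_4$ and gives nothing; the obstruction lives in the \emph{higher} pieces $\ch_5,\ch_6,\ch_8$ (i.e.\ the degree $2$, $4$, and $8$ parts of $\td(-N_{Y/X})$ pushed forward into $H^{10},H^{12},H^{16}$). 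Moreover one cannot go straight to the top piece: in the actual argument the integrality of $\ch_5(\chi)=a_1a_4\theta^5/48$ and $\ch_6(\chi)=(4a_1^2-a_2)a_4\theta^6/576$ must first be used to pin down the parities of $a_1$ and $a_2$, and only then does the single offending term $a_4^2\theta^8/414\,720=\frac{7a_4^2}{72}\frac{\theta^8}{8!}$ in $\ch_8(\chi)$ produce the contradiction when $a_4$ is odd (resp.\ not divisible by $4$). Also, no factor $\binom{n}{4}$ appears: the integrality constraint is that the coefficient of the \emph{primitive} integral class $\frac{\theta^k}{k!}$ in each $\ch_k(\chi)$ is an integer; passing to degrees against powers of $\theta$ as you suggest would multiply by a binomial coefficient and weaken the $2$-adic constraint, possibly fatally.

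Two of your proposed auxiliary inputs are also off-target. Lemma~\ref{Segre}\ref{ac} concerns the Segre numbers of abstract cobordism classes and plays no role here; it is not what upgrades $2\mid a_4$ to $4\mid a_4$. Likewise no second pushed-forward bundle is needed for part~\ref{eb}: the only extra ingredient when $n\geq 14$ is that Barth--Lefschetz now also gives $s_3(Y)=a_3f^*(\frac{\theta^3}{3!})$, hence $f_*s_3(Y)=35a_3a_4\frac{\theta^7}{7!}$, which makes the $s_3$-contribution to $\ch_8(\chi)$ (the term $-a_1s_3(Y)f^*\theta/720$ in the Todd class) explicit and $2$-adically harmless, so that the same term $\frac{7a_4^2}{72}\frac{\theta^8}{8!}$ again forces the stronger divisibility. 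To turn your outline into a proof you would need to write out $\td(-N_{Y/X})$ through degree $8$ using $c(-N_{Y/X})=s(Y)^{-1}$ (legitimate since $T_X$ is trivial), substitute the Barth--Lefschetz relations, and carry out the valuation count just described.
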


\begin{proof}
Let ${f:Y\to X}$ be the inclusion map.
 Proposition~\ref{BarthLefschetz} shows the existence of integers~$a_i $ such that $s_i(Y)=a_if^*(\frac{\theta^i}{i!})$ for $i\in\{0,1,2,4\}$ (if $n\geq 12$), or $i\in\{0,1,2,3,4\}$ (if $n\geq 14$), and such that, in addition, the cohomology class of $Y$ in $X$ is equal to $a_4\frac{\theta^4}{4!}$. As the class $f_*s_3(Y)$ is Hodge, we may also write $f_*s_3(Y)=b\frac{\theta^{7}}{7!}$ for some $b\in\Z$.

As noted during the proof of Proposition~\ref{BarthLefschetz}, one has $c(-N_{Y/X})=s(Y)^{-1}$,
hence
\begin{alignat*}{2}
c(-N_{Y/X})&=1-a_1f^*\theta+(a_1^2-a_2/2)f^*\theta^2+((a_1a_2-a_1^3)f^*\theta^3-s_3(Y))\\
&\hspace{.9em}+((a_1^4-3a_1^2a_2/2+a_2^2/4-a_4/24)f^*\theta^4+2a_1s_3(Y)f^*\theta)+\cdots.
\end{alignat*}
One can then compute the Todd class (see \cite[Example 3.2.4]{Fulton}):
\begin{equation}
\label{Td}
\begin{alignedat}{2}
\td(-N_{Y/X})&=1-a_1f^*\theta/2+(4a_1^2-a_2)f^*\theta^2/24+(a_1a_2-2a_1^3)f^*\theta^3/48\\
&\hspace{.9em}+(144a_1^4-108a_1^2a_2+12a_2^2+a_4)f^*\theta^4/17\,280-a_1s_3(Y)f^*\theta/720+\cdots.
\end{alignedat}
\end{equation}

Let $\chi\in K^0(X)$ be the image in complex topological $K$-theory of the algebraic $K$-theory class $f_*[\mathcal{O}_Y]$. The Grothendieck--Riemann--Roch theorem \cite[Theorem 15.2]{Fulton} applied to the inclusion $f$ and to the algebraic $K$-theory class $[\mathcal{O}_Y]$ shows that $\ch(\chi)=f_*\td(-N_{Y/X})$. Since~$f_*f^*$ is the cup-product with the cohomology class of~$Y$ which equals $a_4\frac{\theta^4}{4!}$, this identity may be combined with (\ref{Td}) to give
\begin{equation}
\label{ch}
\begin{alignedat}{2}
\ch(\chi)&=a_4\theta^4/24-a_1a_4\theta^5/48+(4a_1^2-a_2)a_4\theta^6/576+(a_1a_2-2a_1^3)a_4\theta^7/1\,152\\
&\hspace{.9em}+(144a_1^4-108a_1^2a_2+12a_2^2+a_4)a_4\theta^8/414\,720-ba_1\theta^8/3\,628\,800+\cdots.
\end{alignedat}
\end{equation}
By Lemma~\ref{Chernint}, which applies because $X$ is diffeomorphic to $(\bS^1)^{2n}$, the left side $\ch(\chi)$ of (\ref{ch}) is an integral cohomology class, hence so is its right side.
 
Assume by way of contradiction that $a_4$ is odd. The integrality of the class $\ch_{5}(\chi)=a_1a_4\theta^5/48$ implies that $a_1$ is even, and the integrality of the class $\ch_{6}(\chi)=(4a_1^2-a_2)a_4\theta^6/576$ shows that $a_2$ is divisible by~$4$. All terms  appearing in the expression for $\ch_{8}(\chi)$ given in (\ref{ch}) are multiples of~$\frac{\theta^8}{8!}$ with coefficients a rational number of nonnegative $2$-adic valuation, with the exception of $a_4^2\theta^8/414\,720=\frac{7a_4^2}{72}\frac{\theta^8}{8!}$ since $a_4$ is assumed to be odd. It follows that $\ch_{8}(\chi)$ is not an integral cohomology class, which gives a contradiction. This proves~\ref{ea}.

Now suppose $n\geq 14$. Then
$b\theta^{7}/7!=f_*s_3(Y)=f_*f^*(a_3\theta^3/3!)=a_3a_4\theta^7/144$, so that $b=35a_3a_4$.  Assume by way of contradiction that $a_4$ is not divisible by $4$. By the integrality of $\ch_{5}(\chi)=a_1a_4\theta^5/48$, we see that $a_1a_4$ is even. The integrality of $\ch_{6}(\chi)=(4a_1^2-a_2)a_4\theta^6/576$ shows that $a_2$ is even. These pieces of information imply that in the right side of the equality
$$\ch_{8}(\chi)=(144a_1^4-108a_1^2a_2+12a_2^2+a_4)a_4\theta^8/414\,720-35a_1a_3a_4\theta^8/3\,628\,800,$$
all terms are multiples of~$\frac{\theta^8}{8!}$ with coefficients a rational number of nonnegative $2$-adic valuation, with the exception of $a_4^2\theta^8/414\,720=\frac{7a_4^2}{72}\frac{\theta^8}{8!}$ since we assumed that~$a_4$ is not a multiple of $4$. This contradicts the integrality of $\ch_{8}(\chi)$ and proves~\ref{eb}.
\end{proof}

Combining Proposition~\ref{codim4} with \cite[11.2.1 and Theorem 17.5.1]{BL}, we get the following.

\begin{thm}
\label{thsmooth4}
Let $(X,\theta)$ be a very general complex Jacobian of dimension $n$. For $\lambda\in\Z$, the class $\lambda\frac{\theta^4}{4!}$ is algebraic but is not a $\Z$-linear combination of classes of smooth subvarieties of $X$ 
\begin{enumerate}
\item\label{fa} if $n\geq 12$ and $\lambda$ is odd;
\item\label{fb} if $n\geq 14$ and $\lambda$ is not divisible by $4$.
\end{enumerate}
\end{thm}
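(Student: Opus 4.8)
The plan is to deduce the theorem formally from Proposition~\ref{codim4}, in exactly the way Theorem~\ref{thsmooth} was deduced from Propositions~\ref{BarthLefschetz} and~\ref{even}. Two preliminary facts are needed. First, the integral class $\frac{\theta^4}{4!}\in H^8(X,\Z)$ is algebraic: by Poincar\'e's formula \cite[11.2.1]{BL} it is the cycle class of the image $W_{n-4}(C)\subset X$ of the $(n-4)$-th symmetric power of the curve under the Abel--Jacobi map, so $\lambda\frac{\theta^4}{4!}$ is algebraic for every $\lambda\in\Z$. Second, a very general Jacobian $(X,\theta)$ satisfies the standing hypothesis of Proposition~\ref{codim4}, namely that $\Hdg^{2k}(X,\Z)$ is generated by $\frac{\theta^k}{k!}$ for every $k\geq 0$: by \cite[Theorem~17.5.1]{BL} one has $\Hdg^{2k}(X,\Q)=\Q\cdot\theta^k$ for $(X,\theta)$ very general, and since $\frac{\theta^k}{k!}$ is an integral class (again by Poincar\'e's formula) that is moreover primitive in $H^{2k}(X,\Z)$, it follows that $\Hdg^{2k}(X,\Z)=\Z\cdot\frac{\theta^k}{k!}$.

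Now suppose, for a contradiction, that $\lambda\frac{\theta^4}{4!}=\sum_i\lambda_i[Y_i]$ in $H^8(X,\Z)$ for finitely many smooth projective subvarieties $Y_i\subset X$ and integers $\lambda_i$. Writing $c_i$ for the codimension of $Y_i$, we have $[Y_i]\in H^{2c_i}(X,\Z)$; since $H^*(X,\Z)$ is graded and the left-hand side lies in $H^8(X,\Z)$, only the summands with $c_i=4$ can contribute, so we may assume that every $Y_i$ has codimension $4$. If $n\geq 12$, Proposition~\ref{codim4}\ref{ea} shows that each $[Y_i]$ is an integral multiple of $2\frac{\theta^4}{4!}$, hence $\lambda\frac{\theta^4}{4!}$ is an even integral multiple of $\frac{\theta^4}{4!}$; as $\frac{\theta^4}{4!}$ is a nonzero primitive class in $H^8(X,\Z)$, this forces $\lambda$ to be even, contradicting the hypothesis in~\ref{fa}. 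Likewise, if $n\geq 14$, Proposition~\ref{codim4}\ref{eb} gives $[Y_i]\in 4\Z\cdot\frac{\theta^4}{4!}$ for each $i$, so $4$ divides $\lambda$, contradicting the hypothesis in~\ref{fb}.

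I expect no real obstacle here: the mathematical content lies entirely in Proposition~\ref{codim4} (and, upstream of it, in Lemma~\ref{Chernint} and the Grothendieck--Riemann--Roch computation), while what remains is bookkeeping. The only two points deserving a word of justification are the reduction to pure codimension $4$ (which uses the grading of $H^*(X,\Z)$ together with the equidimensionality of a subvariety) and the two uses of the primitivity of $\frac{\theta^k}{k!}$: once to promote the rational statement \cite[Theorem~17.5.1]{BL} to the integral form demanded by Proposition~\ref{codim4}, and once to pass from the divisibility of the class $\lambda\frac{\theta^4}{4!}$ to the divisibility of $\lambda$ itself.
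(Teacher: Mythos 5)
Your proposal is correct and matches the paper's (one-line) proof: the paper simply combines Proposition~\ref{codim4} with Poincar\'e's formula \cite[11.2.1]{BL} for algebraicity and \cite[Theorem~17.5.1]{BL} plus primitivity of $\frac{\theta^k}{k!}$ for the hypothesis on $\Hdg^{2k}(X,\Z)$, exactly as you do. Your extra bookkeeping (reduction to pure codimension $4$ and the two uses of primitivity) is the intended, routine filling-in of that one line.
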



\providecommand{\bysame}{\leavevmode\hbox to3em{\hrulefill}\thinspace}
\providecommand{\MR}{\relax\ifhmode\unskip\space\fi MR }
\providecommand{\MRhref}[2]{%
  \href{http://www.ams.org/mathscinet-getitem?mr=#1}{#2}
}
\providecommand{\href}[2]{#2}

\end{document}